\definecolor{labelkey}{rgb}{0,0.08,0.45}
\definecolor{refkey}{rgb}{0,0.6,0.0}
\definecolor{Brown}{rgb}{0.45,0.0,0.05}
\definecolor{lime}{rgb}{0.00,0.8,0.0}
\definecolor{lblue}{rgb}{0.5,0.5,0.99}
\definecolor{lblue}{rgb}{0.8,0.85,1.00}
\definecolor{anotherblue}{rgb}{.8, .8,1}
\definecolor{violet}{rgb}{0.9,0.6,0.9}
\definecolor{greenyellow}{rgb}{0.53,0.99,0.18}
\definecolor{Lyellow}{rgb}{0.99,0.99,0.87}
\definecolor{Lgray}{rgb}{0.93,0.93,0.93}
\newcommand{\vast}{\bBigg@{4}}
\newcommand{\Vast}{\bBigg@{5}}
\newcommand{\kkk}{\ensuremath{{k\in{\mathbb N}}}}
\newcommand{\menge}[2]{\big\{{#1}~\big |~{#2}\big\}}
\newcommand{\mmenge}[2]{\bigg\{{#1}~\bigg |~{#2}\bigg\}}
\newcommand{\mmmenge}[2]{\Bigg\{{#1}~\Bigg |~{#2}\Bigg\}}
\newcommand{\MMenge}[2]{\vast\{{#1}~\vast|~{#2}\vast\}}
\newcommand{\To}{\ensuremath{\rightrightarrows}}
\newcommand{\fenv}[1]%
{\ensuremath{\,\overrightarrow{\operatorname{env}}_{#1}}}
\newcommand{\benv}[1]%
{\ensuremath{\,\overleftarrow{\operatorname{env}}_{#1}}}
\newcommand{\emp}{\ensuremath{\varnothing}}
\newcommand{\scal}[2]{\left\langle{#1},{#2}  \right\rangle}
\newcommand{\tscal}[2]{\langle{#1},{#2}  \rangle}
\newcommand{\exi}{\ensuremath{\exists\,}}
\newcommand{\zeroun}{\ensuremath{\left]0,1\right[}}
\newcommand{\RR}{\ensuremath{\mathbb R}}
\newcommand{\ZZ}{\ensuremath{\mathbb Z}}
\newcommand{\RP}{\ensuremath{\mathbb R}_+}
\newcommand{\RPP}{\ensuremath{\mathbb R}_{++}}
\newcommand{\NN}{\ensuremath{\mathbb N}}
\newcommand{\gr}{\ensuremath{\operatorname{gr}}}
\newcommand{\inte}{\ensuremath{\operatorname{int}}}
\newcommand{\card}{\ensuremath{\operatorname{card}}}
\newcommand{\bd}{\ensuremath{\operatorname{bdry}}}
\newcommand{\ran}{\ensuremath{\operatorname{ran}}}
\newcommand{\rank}{\ensuremath{\operatorname{rank}}}
\newcommand{\cone}{\ensuremath{\operatorname{cone}}}
\newcommand{\lspan}{\ensuremath{\operatorname{span}}}
\newcommand{\aff}{\ensuremath{\operatorname{aff}}}
\newcommand{\pa}{\ensuremath{\operatorname{par}}}
\newcommand{\minf}{\ensuremath{-\infty}}
\newcommand{\pinf}{\ensuremath{+\infty}}
\newcommand{\ball}[2]{\operatorname{ball}({#1};{#2})}
\newcommand{\sphere}[2]{\operatorname{sphere}({#1};{#2})}
\newcommand{\wt}[1]{\widetilde{#1}}
\newcommand{\nc}[2]{N^{#2}_{#1}}
\newcommand{\fnc}[1]{N^{\text{\rm Fr\'e}}_{#1}}
\newcommand{\cnc}[1]{N^{\text{\rm conv}}_{#1}}
\newcommand{\pnX}[1]{N^{\text{\rm prox}}_{#1}} 
\newcommand{\pn}[2]{\widehat{N}^{#2}_{#1}} 
\renewcommand\p@enumii{}
\def\disp{\displaystyle}
\def\ve{\varepsilon}
\def\dd{\delta}
\def\dm{{{n}}}
\def\dn{\downarrow}
\def\disp{\displaystyle}
\newcommand{\mcA}{\ensuremath{\mathcal A}}
\newcommand{\mcB}{\ensuremath{\mathcal B}}
\newcommand{\mcJ}{\ensuremath{\mathcal J}}
\newcommand{\mcC}{\ensuremath{\mathcal C}}
\newcommand{\supp}{\ensuremath{\operatorname{supp}}}
\newcommand{\sgn}{\ensuremath{\operatorname{sgn}}}
\newtheorem{theorem}{Theorem}[section]
\newtheorem{lemma}[theorem]{Lemma}
\newtheorem{corollary}[theorem]{Corollary}
\newtheorem{proposition}[theorem]{Proposition}
\newtheorem{definition}[theorem]{Definition}
\theoremstyle{plain}{\theorembodyfont{\rmfamily}
}
\theoremstyle{plain}{\theorembodyfont{\rmfamily}
}
\theoremstyle{plain}{\theorembodyfont{\rmfamily}
}
\theoremstyle{plain}{\theorembodyfont{\rmfamily}
\newtheorem{example}[theorem]{Example}}
\theoremstyle{plain}{\theorembodyfont{\rmfamily}
\newtheorem{remark}[theorem]{Remark}}
\newcommand{\boxedeqn}[1]{%
    \[\fbox{%
        \addtolength{\linewidth}{-2\fboxsep}%
        \addtolength{\linewidth}{-2\fboxrule}%
        \begin{minipage}{\linewidth}%
        \begin{equation}#1\\[+4mm]\end{equation}%
        \end{minipage}%
      }\]%
  }
\newcommand{\dotcup}{\ensuremath{\mathaccent\cdot\cup}}
\newcounter{count}
\begin{document}

\title{\textrm{
Restricted normal cones and\\ sparsity optimization with affine constraints  }}

\author{
Heinz H.\ Bauschke\thanks{Mathematics, University of British
Columbia, Kelowna, B.C.\ V1V~1V7, Canada. E-mail:
\texttt{heinz.bauschke@ubc.ca}.},
D.\ Russell Luke\thanks{Institut f\"ur Numerische und Angewandte Mathematik,\
Universit\"at G\"ottingen,\ Lotzestr.~16--18, 37083 G\"ottingen, Germany. E-mail: \texttt{r.luke@math.uni-goettingen.de}.}, Hung M.\
Phan\thanks{Mathematics, University of British Columbia, Kelowna,
B.C.\ V1V~1V7, Canada. E-mail:  \texttt{hung.phan@ubc.ca}.}, ~and
Xianfu\ Wang\thanks{Mathematics, University of British Columbia,
Kelowna, B.C.\ V1V~1V7, Canada. E-mail:
\texttt{shawn.wang@ubc.ca}.}}

\date{May 2, 2012}

\maketitle

\vskip 8mm

\begin{abstract} \noindent
The problem of finding a vector with the fewest nonzero elements 
that satisfies an underdetermined system of linear equations is 
an NP-complete problem that is typically solved numerically via 
convex heuristics or nicely-behaved nonconvex relaxations.  In   
this paper we consider the elementary method of alternating 
projections (MAP) for solving the sparsity optimization problem 
without employing convex heuristics.  In a parallel paper we 
recently introduced the restricted normal cone which generalizes 
the classical Mordukhovich normal cone and reconciles some fundamental 
gaps in the theory of sufficient conditions for local linear convergence 
of the MAP algorithm.  We use the restricted normal cone together with the 
notion of superregularity, which is naturally satisfied for the 
affine sparse optimization problem, to obtain local linear convergence 
results with estimates for the radius of convergence of the MAP algorithm
applied to sparsity optimization with an affine constraint.  
\end{abstract}

{\small \noindent {\bfseries 2010 Mathematics Subject
Classification:} {Primary 49J52, 49M20, 90C26;
Secondary 15A29, 47H09, 65K05, 65K10, 94A08. 
}}

\noindent {\bfseries Keywords:}
Compressed sensing,
constraint qualification,
Friedrichs angle,
linear convergence,
method of alternating projections,
normal cone,
projection operator,
restricted normal cone, 
sparsity optimization, 
superregularity, 
underdetermined. 


\section{Introduction}
We consider the problem 
of sparsity optimization with affine constraints:
\begin{equation}
\label{e:0314a}
{\mathrm{minimize}}\;\;\|x\|_0~ \mbox{ subject to } Mx=p
\end{equation}
where $m$ and $n$ are integers such that  $1\leq m<\dm$,
$M$ is a real $m$-by-$\dm$ matrix, denoted $M\in\RR^{m\times\dm}$, and 
$\|x\|_0 := \sum_{j=1}^\dm |\sgn(x_j)|$
counts\footnote{We set $\sgn(0):=0$.} the number of nonzero entries of 
real vectors $x$ of length $\dm$, denoted by $x\in \RR^\dm$.

If there is some a priori bound on the desired sparsity of the
solution, represented by an integer $s$,
where $1\leq s\leq \dm$, then one can relax \eqref{e:0314a} to the feasibility problem
\begin{equation}\label{e:sparse feas}
   \mbox{find } c\in A\cap B,
\end{equation}
where
\begin{equation}\label{e:AB}
A:=\menge{x\in \RR^\dm}{\|x\|_0\leq s}
\quad\text{and}\quad B:=\menge{x\in \RR^\dm}{Mx=p}.
\end{equation}
The sparsity subspace associated with $a=(a_1,\ldots,a_\dm)\in \RR^\dm$ is
\begin{equation}\label{e:supp}
  \supp(a):=\menge{x\in \RR^\dm}{x_j=0 \text{~whenever~} a_j=0}.
\end{equation}
Also, we define
\begin{equation}
\label{e:120411a}
I\colon \RR^\dm\to \{1,\ldots,\dm\}\colon
x\mapsto \menge{i\in\{1,\ldots,\dm\}}{x_i\neq 0},
\end{equation}
and we denote the $i^\text{th}$ standard unit vector by $e_i$
for every $i\in\{1,\ldots,\dm\}$.

Problem \eqref{e:0314a} is in general NP-complete \cite{Natarajan95} and 
so convex and nonconvex relaxations are typically employed for its solution.  For a primal-dual 
convex strategy see \cite{BorLuke10b};  for relaxations to $\ell_p ~(0<p<1)$ see \cite{LaiWang10};  
see \cite{BrucksteinDonohoElad09} for a comprehensive review and applications.
In this paper we apply recent tools developed by the authors in \cite{BLPW12a}
to prove local linear  convergence of an elementary algorithm applied to the feasibility 
formulation of the problem \eqref{e:sparse feas}, that is, we do not use convex heuristics or 
conventional smooth relaxations.  The key to our results 
is a new normal cone called the \emph{restricted normal cone}.  A central feature of 
our approach is the decomposition of the original nonconvex set into collections
of simpler (indeed, linear) sets which can be treated separately.   Ours is not the first 
result on local linear convergence for sparsity optimization with affine constraints.  
Indeed the problem was considered more than twenty years ago by Combettes and Trussell 
who show local convergence of alternating projections \cite{CombTrus90}.
The problem 
was recently used to illustrate the application of analytical tools developed in \cite{LM} and \cite{Luke12}.  Other
approaches that also yield convergence results for different algorithms can be found in 
\cite{ABRS} and \cite{BeckTeboulle11}, 
with the latter of these being notable in that they obtain {\em global} convergence results with additional 
assumptions (restricted isometry)  that we do not consider here.  The novelty of the results we report here, 
based principally on the works \cite{LM}, \cite{LLM} and \cite{BLPW12a}, is that we obtain not only 
convergence rates but also radii of convergence when all conventional sufficient conditions
for local linear convergence, notably those of \cite{LM} and \cite{LLM},  fail.  In this sense, our 
criteria for convergence are more robust and yield richer information than other available notions.  

The remainder of the paper is organized as follows.
In Section~\ref{s:prelim}, we define the restricted normal cones and corresponding 
constraint qualifications for sets and collections of sets first introduced in
\cite{BLPW12a} as well as the notion of superregularity introduced in 
\cite{LLM} adapted to the restricted normal cones.  A few of the many
properties of these objects developed in \cite{BLPW12a} are restated in preparation for
Section~\ref{s:russell} where we apply these tools to a convergence analysis of the 
method of alternating projections (MAP) for the problem of 
finding a vector $c\in \mathbb{R}^n$ satisfying an affine 
constraint and having sparsity no greater than some a priori bound, that is, 
we solve \eqref{e:sparse feas} for $A$ and $B$ defined by \eqref{e:AB}.
Given a starting point $b_{-1}\in X$,
MAP sequences $(a_k)_\kkk$ and $(b_k)_\kkk$ are generated as
follows:
\begin{equation}\label{e:MAP}
(\forall\kkk)\qquad
a_{k} := P_Ab_{k-1},\quad b_k := P_Ba_k.
\end{equation}
We do not attempt to review the history of the MAP, its many extensions, and its rich
and convergence theory;
the interested reader is referred to, e.g.,
\cite{BC2011},
\cite{CensorZenios},
\cite{Deutsch},
and the references therein.
We consider the MAP iteration to be a prototype for more
sophisticated approaches, both of projection type or more generally 
subgradient algorithms, hence our focus on this simple algorithm.  

\subsection*{Notation}

Our notation is standard and follows largely
\cite{BC2011},\cite{Borzhu05},
\cite{Boris1},
\cite{Rock70},
and \cite{Rock98} to which the reader is referred for more background on variational
analysis.  Throughout this paper, we assume that $X=\RR^n$ 
 with inner product $\scal{\cdot}{\cdot}$, induced norm $\|\cdot\|$,
and induced metric $d$. 
The real numbers are $\RR$,
the integers are $\ZZ$, and
$\NN := \menge{z\in\ZZ}{z\geq 0}$.
Further, $\RP := \menge{x\in\RR}{x\geq 0}$,
$\RPP := \menge{x\in \RR}{x>0}$.
Let $R$ and $S$ be subsets of $X$.
Then the closure of $S$ is $\overline{S}$,
the interior of $S$ is $\inte(S)$,
the boundary of $S$ is $\bd(S)$,
and the smallest affine and linear subspaces containing $S$
are $\aff S$ and $\lspan S$, respectively.
If $Y$ is an affine subspace of $X$, then $\pa Y$ is the
unique linear subspace parallel to $Y$. 
The negative polar cone of $S$ is
$S^\ominus=\menge{u\in X}{\sup\scal{u}{S}\leq 0}$.
We also set $S^\oplus := -S^\ominus$
and $S^\perp := S^\oplus \cap S^\ominus$.
We also write
$R\oplus S$ for $R+S:=\menge{r+s}{(r,s)\in R\times S}$
provided that $R\perp S$, i.e., $(\forall (r,s)\in R\times S)$
$\scal{r}{s}=0$.
We write $F\colon X\To X$, if $F$ is a mapping from $X$ to its power set,
i.e., $\gr F$, the graph of $F$, lies in $X\times X$.
Abusing notation slightly, we will write $F(x) = y$ if $F(x)=\{y\}$.
A nonempty subset $K$ of $X$ is a cone
if $(\forall\lambda\in\RP)$ $\lambda K := \menge{\lambda k}{k\in
K}\subseteq K$.
The smallest cone containing $S$ is denoted $\cone(S)$;
thus, $\cone(S) := \RP\cdot S := \menge{\rho s}{\rho\in\RP,s\in S}$
if $S\neq\varnothing$ and $\cone(\varnothing):=\{0\}$.
If $z\in X$ and $\rho\in\RPP$, then
$\ball{z}{\rho} := \menge{x\in X}{d(z,x)\leq \rho}$ is
the closed ball centered at $z$ with radius $\rho$ while
$\sphere{z}{\rho} := \menge{x\in X}{d(z,x)= \rho}$ is
the (closed) sphere centered at $z$ with radius $\rho$.
If $u$ and $v$ are in $X$,
then $[u,v] := \menge{(1-\lambda)u+\lambda v}{\lambda\in [0,1]}$ is
the line segment connecting $u$ and $v$.

\section{Foundations}
\label{s:prelim}
We review in this section some of the fundamental tools used in the analysis of 
projection algorithms, and in particular MAP, for the solution of feasibility problems 
like \eqref{e:sparse feas}.  The tools below are intended for more general situations
where the sets $A$ and $B$ might admit decompositions into unions 
of sets, in which case we consider the feasibility problem
\begin{equation}\label{e:gen feas}
\mbox{ find }\quad c\in 
\bigg(\bigcup_{i\in I}A_i\bigg) \cap \bigg(\bigcup_{j\in J}B_j\bigg)
\end{equation}
Central to the convergence analysis of the MAP algorithm for solving 
\eqref{e:gen feas} is the notion of regularity of the intersection and 
the regularity of neighborhoods of the intersection.  These ideas are
developed in detail in \cite{BLPW12a}.  We review the main points 
relevant to our application here.

Normal cones are used to provide information about the orientation and local geometry of subsets of $X$.  
There are many species of normal cones, the key ones for our purposes are defined here.
In addition to the 
classical notions ({\em proximal, Fr\'echet, Mordukhovich}) we define the 
{\em restricted} normal cone introduced and developed in \cite{BLPW12a}.
\begin{definition}[normal cones]
\label{d:NCone}
Let $A$ and $B$ be nonempty subsets of $X$, and let $a$ and
$u$ be in $X$.
If $a\in A$, then various normal cones of $A$ at $a$ are defined as follows:
\begin{enumerate}
\item
\label{d:pnB} The \emph{$B$-restricted proximal normal cone} of $A$
at $a$ is
\begin{equation}\label{e:pnB}
\pn{A}{B}(a):= \cone\Big(\big(B\cap P_A^{-1}a\big)-a\Big) =
\cone\Big(\big(B-a\big)\cap \big(P_A^{-1}a-a\big)\Big).
\end{equation}
\item
\label{d:pnX} The (classical) \emph{proximal normal cone} of $A$ at
$a$ is
\begin{equation}\label{e:pnX}
\pnX{A}(a):=\pn{A}{X}(a)= \cone\big(P_A^{-1}a-a\big).
\end{equation}
\item\label{d:nc}
The \emph{$B$-restricted normal cone} $\nc{A}{B}(a)$ is implicitly
defined by $u\in\nc{A}{B}(a)$ if and only if there exist sequences
$(a_k)_\kkk$ in $A$ and $(u_k)_\kkk$ in $\pn{A}{B}(a_k)$ such that
$a_k\to a$ and $u_k\to u$.
\item\label{d:fnc} The \emph{Fr\'{e}chet normal cone}
$\fnc{A}(a)$ is implicitly defined by $u\in \fnc{A}(a)$ if and only
if $(\forall \ve>0)$ $(\exi\dd>0)$ $(\forall x\in A\cap
\ball{a}{\delta})$ $\scal{u}{x-a}\leq\ve\|x-a\|$.
\item\label{d:cnc}
The \emph{convex normal from convex analysis} $\cnc{A}(a)$ is
implicitly defined by $u\in\cnc{A}(a)$ if and only if
$\sup\scal{u}{A-a}\leq 0$.
\item\label{d:Mnc}
The \emph{Mordukhovich normal cone} $N_A(a)$ of $A$ at $a$ is
implicitly defined by $u\in N_A(a)$ if and only if there exist
sequences $(a_k)_\kkk$ in $A$ and $(u_k)_\kkk$ in $\pnX{A}(a_k)$
such that $a_k\to a$ and $u_k\to u$.
\end{enumerate}
If $a\notin A$, then all normal cones are defined to be empty.
\end{definition}

The following elementary calculus rules are a restatement of 
\cite[Proposition~3.7]{BLPW12a}.
\begin{proposition}
\label{p:elementary} Let $A$, $A_1$, $A_2$, $B$, $B_1$, and $B_2$ be
nonempty subsets of $X$, let $c\in X$,
and suppose that $a\in A\cap A_1\cap A_2$.
Then the following hold:
\begin{enumerate}
\item\label{p:ele-i}
If $A$ and $B$ are convex, then $\pn{A}{B}(a)$ is convex.
\item\label{p:ele-ii}
$\pn{A}{B_1\cup B_2}(a)=\pn{A}{B_1}(a)\cup\pn{A}{B_2}(a)$ and
$\nc{A}{B_1\cup B_2}(a)=\nc{A}{B_1}(a)\cup\nc{A}{B_2}(a)$.
\item\label{p:ele-iii}
If $B\subseteq A$, then $\pn{A}{B}(a)=\nc{A}{B}(a)=\{0\}$.
\item\label{p:ele-iv}
If $A_1\subseteq A_2$, then $\pn{A_2}{B}(a)\subseteq
\pn{A_1}{B}(a)$.
\item\label{p:ele-v}
$-\pn{A}{B}(a)=\pn{-A}{-B}(-a)$, $-\nc{A}{B}(a)=\nc{-A}{-B}(-a)$,
and $-N_{A}(a)=N_{-A}(-a)$.
\item\label{p:ele-vi}
$\pn{A}{B}(a)=\pn{A-c}{B-c}(a-c)$ and $\nc{A}{B}(a)=\nc{A-c}{B-c}(a-c)$.
\end{enumerate}
\end{proposition}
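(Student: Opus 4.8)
The plan is to prove each statement first on the proximal level, by unfolding the defining formula $\pn{A}{B}(a)=\cone\big((B\cap P_A^{-1}a)-a\big)$ and pushing the relevant set operation through $\cone$, and then to lift it to the limiting cone $\nc{A}{B}$ and the Mordukhovich cone $N_A=\nc{A}{X}$ by feeding the pointwise proximal identity into the sequential definition in Definition~\ref{d:NCone}\ref{d:nc}. In other words, I would reduce every claim to an elementary property of the set-valued inverse $P_A^{-1}$ combined with the behaviour of $\cone$, $\cap$, $\cup$ and the affine maps $x\mapsto x-c$ and $x\mapsto-x$, and only afterwards lift it to the Mordukhovich-type closures.

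The projection lemmas I would isolate first are the following. For \ref{p:ele-iii}, the point is that $b\in A$ forces $P_Ab=\{b\}$ (since $d(b,A)=0$), so every $b\in B\cap P_A^{-1}a$ satisfies $b=a$; hence $(B\cap P_A^{-1}a)-a\subseteq\{0\}$ and the cone collapses to $\{0\}$. For \ref{p:ele-iv}, with $a\in A_1\subseteq A_2$ and $a\in P_{A_2}b$, the squeeze $\|b-a\|=d(b,A_2)\le d(b,A_1)\le\|b-a\|$ forces $a\in P_{A_1}b$, i.e. $P_{A_2}^{-1}a\subseteq P_{A_1}^{-1}a$, and the stated inclusion follows by monotonicity of $\cap$ and $\cone$. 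For \ref{p:ele-v} and \ref{p:ele-vi}, I would record that projection commutes with the isometries $x\mapsto-x$ and $x\mapsto x-c$, giving $P_{-A}^{-1}(-a)=-P_A^{-1}a$ and $P_{A-c}^{-1}(a-c)=P_A^{-1}a-c$; combined with $\cone(-S)=-\cone S$ these yield the proximal identities verbatim, and $N_A=\nc{A}{X}$ together with $-X=X$ reduces the Mordukhovich statement to the $B=X$ case. The union rule \ref{p:ele-ii} on the proximal level is purely formal: intersection distributes over the union $(B_1\cup B_2)-a$, and $\cone(S_1\cup S_2)=\cone S_1\cup\cone S_2$.

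Lifting these to $\nc{A}{B}$ is mostly transparent. For the translation, reflection and set-inclusion statements the sequential definition transfers the identity directly, because $x\mapsto x-c$ and $x\mapsto-x$ are homeomorphisms and the pointwise proximal identity holds at every nearby base point. The one nontrivial lifting is the union rule \ref{p:ele-ii} for $\nc{A}{B}$: starting from $u_k\in\pn{A}{B_1}(a_k)\cup\pn{A}{B_2}(a_k)$ I would invoke a pigeonhole/subsequence argument to pass to indices for which all $u_k$ lie in a single $\pn{A}{B_i}(a_k)$, thereby landing in $\nc{A}{B_i}(a)$; the reverse inclusion is immediate from $\pn{A}{B_i}\subseteq\pn{A}{B_1\cup B_2}$.

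The step I expect to be the main obstacle is the convexity claim \ref{p:ele-i}, since it is the only one requiring a structural fact rather than a set manipulation. Here I would first use that for convex $A$ the projection characterization gives $P_A^{-1}a-a=\cnc{A}(a)$, a convex cone, so that $(B-a)\cap(P_A^{-1}a-a)$ is convex as an intersection of two convex sets. The remaining point is the small lemma that $\cone C$ is convex whenever $C$ is convex: given $\rho_1c_1,\rho_2c_2\in\cone C$ and $\lambda\in[0,1]$, I would set $\mu=\lambda\rho_1+(1-\lambda)\rho_2$ and, when $\mu>0$, rewrite $\lambda\rho_1c_1+(1-\lambda)\rho_2c_2=\mu\big(\tfrac{\lambda\rho_1}{\mu}c_1+\tfrac{(1-\lambda)\rho_2}{\mu}c_2\big)$ to exhibit it as a nonnegative multiple of a convex combination of $c_1,c_2\in C$, the case $\mu=0$ giving $0\in\cone C$. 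This confirms that $\pn{A}{B}(a)$ is convex and completes the proposition.
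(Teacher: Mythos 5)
Your proof is correct on all six items: the identification $P_A^{-1}a-a=\cnc{A}(a)$ for convex $A$ (valid without closedness, by the standard variational characterization of nearest points), the pigeonhole subsequence step for the limiting union rule, the collapse $P_Ab=\{b\}$ when $b\in A$, the distance squeeze giving $P_{A_2}^{-1}a\subseteq P_{A_1}^{-1}a$, and the equivariance of $P_A^{-1}$ under $x\mapsto-x$ and $x\mapsto x-c$ all check out, and the convention $\cone(\varnothing)=\{0\}$ causes no trouble in the union and restriction rules. Note, however, that there is no in-paper argument to compare against: the paper states this proposition without proof, as a restatement of Proposition~3.7 of the companion work \cite{BLPW12a}, and your definition-unfolding argument is precisely the kind of elementary verification that reference carries out.
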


The {\em constraint qualification-}, or {\em CQ-number} defined next is built upon the 
normal cone and quantifies classical notions of constraint qualifications for set intersections
that indicate sufficient regularity of the intersection.    
\begin{definition}[(joint) CQ-number]
\label{d:CQn}
Let $A$, $\wt{A}$, $B$, $\wt{B}$, be nonempty subsets of $X$,
let $c\in X$, and let $\dd\in\RPP$.
The \emph{CQ-number} at $c$ associated with
$(A,\wt{A},B,\wt{B})$ and $\dd$ is
\begin{equation}
\label{e:CQn}
\theta_\dd:=\theta_\dd\big(A,\wt{A},B,\wt{B}\big)
:=\sup\mmenge{\scal{u}{v}}
{\begin{aligned}
&u\in\pn{A}{\wt{B}}(a),v\in-\pn{B}{\wt{A}}(b),\|u\|\leq 1, \|v\|\leq 1,\\
&\|a-c\|\leq\dd,\|b-c\|\leq\dd.
\end{aligned}}.
\end{equation}
The \emph{limiting CQ-number} at $c$ associated with
$(A,\wt{A},B,\wt{B})$ is
\begin{equation}\label{e:lCQn}
\overline\theta:=\overline\theta\big(A,\wt{A},B,\wt{B}\big)
:=\lim_{\dd\dn0}\theta_\dd\big(A,\wt{A},B,\wt{B}\big).
\end{equation}
For  nontrivial collections\footnote{The collection $(A_i)_{i\in
I}$ is said to be \emph{nontrivial} if $I\neq\varnothing$.} 
$\mathcal{A} := (A_i)_{i\in I}$, $\wt{\mcA}:=(\wt{A}_i)_{i\in
I}$, $\mathcal{B} := (B_j)_{j\in J}$, $\wt{\mcB}:=(\wt{B}_j)_{j\in
J}$ of
nonempty subsets of $X$, the \emph{joint-CQ-number} at
$c\in X$ associated with $(\mathcal{A},\wt{\mcA},\mathcal{B},\wt{\mcB})$
and $\dd>0$ is
\begin{equation}
\label{e:jCQn}
\theta_\dd=\theta_\dd\big(\mathcal{A},
\wt{\mcA},\mathcal{B},\wt{\mcB}\big):=\sup_{(i,j)\in I\times
J}\theta_\dd\big(A_i,\wt{A}_i,B_j,\wt{B}_j\big),
\end{equation}
and the limiting joint-CQ-number at $c$ associated with
$(\mathcal{A},\wt{\mcA},\mathcal{B},\wt{\mcB})$ is
\begin{equation}
\label{e:ljCQn}
\overline\theta=\overline\theta\big(\mathcal{A}, \wt{\mcA},
\mathcal{B},\wt{\mcB}\big)
:=\lim_{\dd\dn0}\theta_\dd\big(\mathcal{A},\wt{\mcA},\mathcal{B},\wt{\mcB}\big).
\end{equation}
\end{definition}
The CQ-number is obviously an instance of the joint-CQ-number when $I$ and $J$ are singletons. 
When the arguments are clear from the context we will simply write $\theta_\dd$ and
$\overline\theta$. 

Using Proposition~\ref{p:elementary}\ref{p:ele-vi}, we see that, for every $x\in X$,
\begin{equation}
\label{e:120405b}
\theta_\dd\big(A,\wt{A},B,\wt{B}\big)\text{~at $c$}
\quad = \quad
\theta_\dd\big(A-x,\wt{A}-x,B-x,\wt{B}-x\big)\text{~at $c-x$.}
\end{equation}

 The CQ-number is based on the behavior of the restricted proximal normal
cone in a neighborhood of a given point.  A related
notion is that of the exact CQ-number, defined next, which is based on the 
restricted normal cone at the point instead of
nearby restricted proximal normal cones.  In both instances, the important case to 
consider is when $c\in A\cap B$ (or when $c\in A_i\cap B_j$ in the joint-CQ case).

\begin{definition}[exact CQ-number and exact joint-CQ-number]
\label{d:exactCQn}
Let $c\in X$.
\begin{enumerate}
\item
Let $A$, $\wt{A}$, $B$ and $\wt{B}$ be nonempty subsets of $X$. The
\emph{exact CQ-number} at $c$ associated with $(A,\wt{A},B,\wt{B})$
is 
\begin{equation}
\label{e:0217a}
\overline{\alpha} :=
\overline{\alpha}\big(A,\wt{A},B,\wt{B}\big) :=
\sup\mmenge{\scal{u}{v}}{u\in\nc{A}{\wt{B}}(c),v\in-\nc{B}{\wt{A}}(c),\|u\|\leq
1, \|v\|\leq 1}.
\end{equation}
where we define  $\overline{\alpha} =\minf$ in the case that $c\notin A\cap B$ which is consistent with 
the convention $\sup\varnothing=\minf$.
\item
Let $\mathcal{A} := (A_i)_{i\in I}$, $\wt{\mcA} := (\wt{A}_i)_{i\in
I}$, $\mathcal{B} := (B_j)_{j\in J}$ and $\wt{\mcB} :=
(\wt{B}_j)_{j\in J}$ be nontrivial collections of nonempty subsets
of $X$. The \emph{exact joint-CQ-number} at $c$ associated with
$(\mathcal{A},\mathcal{B},\wt{\mcA},\wt{\mcB})$ is
\begin{equation}
\label{e:0217b}
\overline{\alpha} := \overline{\alpha}(\mcA,\wt{\mcA},\mcB,\wt{\mcB}) :=
\sup_{(i,j)\in I\times
J}\overline{\alpha}(A_i,\wt{A}_i,B_j,\wt{B}_j).
\end{equation}
\end{enumerate}
\end{definition}

The next result, which we quote from \cite[Theorem 7.8]{BLPW12a}, establishes 
relationships between the condition numbers defined above.
\begin{theorem}\label{t:CQ1}
Let $\mathcal{A} := (A_i)_{i\in I}$, $\wt{\mcA} := (\wt{A}_i)_{i\in
I}$, $\mathcal{B} := (B_j)_{j\in J}$ and $\wt{\mcB} :=
(\wt{B}_j)_{j\in J}$ be nontrivial collections of nonempty subsets
of $X$. Set $A := \bigcup_{i\in I} A_i$ and $B := \bigcup_{j\in
J}B_j$, and suppose that $c\in A\cap B$. Denote the exact
joint-CQ-number at $c$ associated with
$(\mcA,\wt{\mcA},\mcB,\wt{\mcB})$ by $\overline{\alpha}$, 
the joint-CQ-number at $c$ associated with
$(\mcA,\wt{\mcA},\mcB,\wt{\mcB})$ and $\delta>0$ by $\theta_\dd$, 
and the limiting joint-CQ-number at $c$
associated with $(\mcA,\wt{\mcA},\mcB,\wt{\mcB})$ by
$\overline{\theta}$. Then the following hold:
\begin{enumerate}
\item
\label{t:CQ1i} If $\overline\alpha<1$, then the
$(\mcA,\wt{\mcA},\mcB,\wt{\mcB})$-CQ condition holds at $c$.
\item
\label{t:CQ1ii}
$\overline{\alpha}\leq\theta_\delta$.
\item
\label{t:CQ1iii}
$\overline{\alpha}\leq\overline{\theta}$.
\end{enumerate}
If in addition 
$I$ and $J$ are finite,
then the following hold:
\begin{enumerate}[resume]
\item
\label{t:CQ1iv}
$\overline{\alpha}=\overline{\theta}$.
\item
\label{t:CQ1v}
The $(\mcA,\wt{\mcA},\mcB,\wt{\mcB})$-joint-CQ condition holds at $c$
if and only if
$\overline{\alpha}=\overline{\theta}<1$.
\end{enumerate}
\end{theorem}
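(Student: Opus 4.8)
The plan is to establish the two quantitative inequalities (ii) and (iii), upgrade them to the equality (iv) under the finiteness hypotheses, and then read off the qualitative statements (i) and (v). The quantitative heart is (ii). Fix a pair $(i,j)$; if $c\notin A_i\cap B_j$ then $\overline\alpha(A_i,\wt{A}_i,B_j,\wt{B}_j)=\minf$ and nothing is required, so assume $c\in A_i\cap B_j$. Given admissible unit vectors $u\in\nc{A_i}{\wt{B}_j}(c)$ and $v\in-\nc{B_j}{\wt{A}_i}(c)$, Definition~\ref{d:NCone}\ref{d:nc} supplies $a_k\to c$ in $A_i$, $b_k\to c$ in $B_j$ and proximal-normal vectors $u_k\in\pn{A_i}{\wt{B}_j}(a_k)$, $v_k\in-\pn{B_j}{\wt{A}_i}(b_k)$ with $u_k\to u$, $v_k\to v$. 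Since these are cones, I rescale $u_k$ and $v_k$ into the closed unit ball without destroying convergence (here $\|u\|\le1$, $\|v\|\le1$ are used); for large $k$ one has $\|a_k-c\|\le\delta$ and $\|b_k-c\|\le\delta$, so each rescaled pair is admissible in the supremum defining $\theta_\delta(A_i,\wt{A}_i,B_j,\wt{B}_j)$. Passing to the limit gives $\scal{u}{v}\le\theta_\delta(A_i,\wt{A}_i,B_j,\wt{B}_j)\le\theta_\delta$, and taking suprema over $u,v$ and then over $(i,j)$ proves (ii).

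Statement (iii) is then immediate: $\theta_\delta$ is nondecreasing in $\delta$, so $\overline\theta=\inf_{\delta>0}\theta_\delta$, and $\overline\alpha\le\theta_\delta$ for every $\delta$ forces $\overline\alpha\le\overline\theta$. For the reverse inequality underlying (iv) I would use both finiteness assumptions. Pick $\delta_n\downarrow0$; since $I\times J$ is finite the supremum $\theta_{\delta_n}$ is attained at some pair $(i_n,j_n)$, and I choose admissible $a_n,b_n,u_n,v_n$ with $\scal{u_n}{v_n}\ge\theta_{\delta_n}-1/n$. By the pigeonhole principle a single pair $(i_0,j_0)$ recurs infinitely often, and restricting to that subsequence freezes the indices. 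Because $X$ is finite-dimensional the bounded sequences $(u_n)$, $(v_n)$ have convergent subsequences $u_n\to u$, $v_n\to v$ with $\|u\|,\|v\|\le1$, while $a_n\to c$ and $b_n\to c$. Now Definition~\ref{d:NCone}\ref{d:nc} places $u\in\nc{A_{i_0}}{\wt{B}_{j_0}}(c)$ and $v\in-\nc{B_{j_0}}{\wt{A}_{i_0}}(c)$, so a squeeze gives $\overline\theta=\lim\scal{u_n}{v_n}=\scal{u}{v}\le\overline\alpha(A_{i_0},\wt{A}_{i_0},B_{j_0},\wt{B}_{j_0})\le\overline\alpha$; together with (iii) this yields (iv).

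For (i), suppose $\overline\alpha<1$ and, toward a contradiction, that some $w\neq0$ lies in $\nc{A_i}{\wt{B}_j}(c)\cap(-\nc{B_j}{\wt{A}_i}(c))$ for some pair. Then $u:=v:=w/\|w\|$ are admissible unit vectors with $\scal{u}{v}=1>\overline\alpha$, which is impossible; hence each such intersection is $\{0\}$, the content of the CQ condition of \cite{BLPW12a}. Statement (v) then follows by combining this mechanism with (iv): in the finite case $\overline\alpha=\overline\theta$, so the condition $\overline\alpha=\overline\theta<1$ collapses to $\overline\theta<1$, which is the defining inequality of the $(\mcA,\wt{\mcA},\mcB,\wt{\mcB})$-joint-CQ condition in \cite{BLPW12a}, the forward implication being exactly the normalization argument just given.

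The step I expect to be the main obstacle is the reverse inequality in (iv)---transporting proximal normal vectors based at the moving points $a_n,b_n$ to restricted normal vectors based at the single limit $c$. This is precisely where finiteness of $I\times J$ (to freeze one index pair) and finite-dimensionality of $X$ (to extract convergent subsequences of the normal vectors) are both indispensable, and where one must guarantee $c\in A_{i_0}\cap B_{j_0}$ so that Definition~\ref{d:NCone}\ref{d:nc} actually applies---automatic in the present setting because the sets involved are closed.
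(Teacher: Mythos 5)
Your arguments for \ref{t:CQ1i}, \ref{t:CQ1ii} and \ref{t:CQ1iii} are correct; note that the paper itself gives no proof of this theorem (it is quoted from \cite[Theorem~7.8]{BLPW12a}), and your rescaling-and-limit argument for \ref{t:CQ1ii} together with monotonicity of $\delta\mapsto\theta_\delta$ for \ref{t:CQ1iii} does mirror the strategy of that source. The problems are in the two finiteness items. For \ref{t:CQ1iv}, your pigeonhole-plus-compactness skeleton is the right one, but it stands or falls with the step you flag at the end: you need $c\in A_{i_0}\cap B_{j_0}$, and you claim this is ``automatic in the present setting because the sets involved are closed.'' Closedness of the $A_i$ and $B_j$ is \emph{not} among the hypotheses of the statement as reproduced here, and without it \ref{t:CQ1iv} is actually false. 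For instance, in $X=\RR^2$ take $c=(0,0)$, $B_1=\RR\times\{0\}$, $A_1=\{0\}\times\RR$, $A_2=\menge{(0,1/k)}{k\in\{1,2,\ldots\}}$, and all four restricting sets equal to $X$. Every point of $A_2$ is isolated, so $\pnX{A_2}(a)=X$ at each $a\in A_2$; pairing $u=(0,1)$ with $v=(0,1)\in-\pnX{B_1}(c)$ shows $\theta_\delta=1$ for every $\delta>0$, hence $\overline\theta=1$, whereas $\overline\alpha=0$, because the pair $(2,1)$ contributes $\minf$ (as $c\notin A_2$) while the pair $(1,1)$ contributes $0$. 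So a closedness assumption on the $A_i$ and $B_j$ is indispensable for \ref{t:CQ1iv}--\ref{t:CQ1v}; it is present in the source result (and satisfied in this paper's application, where all sets are subspaces), but it has been lost in the restatement, and your proof silently re-imports it. You identified exactly the right missing ingredient, but asserting that it is ``automatic'' is wrong: it must be stated as a hypothesis.

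The second, independent gap is in \ref{t:CQ1v}. The $(\mcA,\wt{\mcA},\mcB,\wt{\mcB})$-joint-CQ condition of \cite{BLPW12a} is not ``the defining inequality $\overline\theta<1$''; it is the geometric condition $\nc{A_i}{\wt{B}_j}(c)\cap\big(-\nc{B_j}{\wt{A}_i}(c)\big)\subseteq\{0\}$ for every $(i,j)\in I\times J$ --- exactly the formulation you yourself use to prove \ref{t:CQ1i}. Consequently your argument delivers only the ``if'' half of \ref{t:CQ1v}: by \ref{t:CQ1iv}, the condition $\overline\alpha=\overline\theta<1$ reduces to $\overline\alpha<1$, and then \ref{t:CQ1i} yields the joint-CQ condition. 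The ``only if'' half --- joint-CQ condition $\Rightarrow\overline\alpha<1$ --- is the \emph{converse} of \ref{t:CQ1i} and follows neither from it nor from its contrapositive; it needs an attainment argument that is absent from your proposal. Concretely: since $\overline\alpha\le1$ by Cauchy--Schwarz, assume $\overline\alpha=1$; finiteness of $I\times J$ gives one pair $(i_0,j_0)$, necessarily with $c\in A_{i_0}\cap B_{j_0}$, whose exact CQ-number equals $1$; choose $u_k\in\nc{A_{i_0}}{\wt{B}_{j_0}}(c)$ and $v_k\in-\nc{B_{j_0}}{\wt{A}_{i_0}}(c)$ with $\|u_k\|\le1$, $\|v_k\|\le1$ and $\scal{u_k}{v_k}\to1$; extract limits $u$ and $v$; the equality case in Cauchy--Schwarz forces $u=v$ and $\|u\|=1$; finally, restricted normal cones are closed sets (a fact that itself requires a diagonal-sequence argument from Definition~\ref{d:NCone}\ref{d:nc}), so this common unit vector lies in the intersection, contradicting the joint-CQ condition. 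Without some such argument, \ref{t:CQ1v} is only half proven.
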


The CQ-number is related to the angle of intersection of the sets.  
The case of linear subspaces underscores the subtleties of this idea and 
illustrates the connection between the CQ-number and the {\em correct} notion of an 
angle of intersection.  
The \emph{Friedrichs angle}  \cite{Frie37} (or simply the \emph{angle}) between subspaces 
$A$ and $B$ is the
number in $[0,\frac{\pi}{2}]$
whose cosine is given by
\begin{equation}
c(A,B):= \sup\menge{|\scal{a}{b}|}{a\in A\cap(A\cap B)^\perp,b\in B\cap(A\cap B)^\perp, \|a\|\leq 1,\|b\|\leq 1},
\end{equation}
and we set $c(A,B) := c(\pa A,\pa B)$ if $A$ and $B$ are two intersecting
affine subspaces of $X$. 
The following result is a consolidation of 
\cite[Theorem~8.12 and Corollary~8.13]{BLPW12a}.
\begin{theorem}[CQ-number of two (affine) subspaces and Friedrichs angle]
\label{t:CQn=c}
Let $A$ and $B$ be affine subspaces of $X$, and let $\dd>0$.
Then
\begin{equation}\label{e:Fried}
\theta_\dd(A,A,B,B)=\theta_\dd(A,X,B,B)=\theta_\dd(A,A,B,X)=c(A,B)<1,
\end{equation}
where the CQ-number at $0$ is defined as in \eqref{e:CQn}.

Moreover, if $A$ and $B$ are {\em affine} subspaces of $X$ with
$c\in A\cap B$, and $\dd>0$, then \eqref{e:Fried} holds at $c$.  
\end{theorem}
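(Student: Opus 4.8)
The plan is to reduce everything to the case $c=0$ with $A$ and $B$ \emph{linear} subspaces and then compute the relevant restricted proximal normal cones explicitly. For the reduction, the translation identity~\eqref{e:120405b} together with $c(A,B)=c(\pa A,\pa B)$ shows that it suffices to treat subspaces through the origin; the ``moreover'' assertion for affine subspaces with $c\in A\cap B$ then follows by translating $c$ to $0$. So assume $A,B$ are subspaces. The key structural observation is that for a subspace $A$ and $a\in A$ one has $P_A^{-1}a=a+A^\perp$, so every generator of $\pn{A}{\wt B}(a)$ is of the form $b-a=b-P_Ab=P_{A^\perp}b$ with $b\in\wt B$. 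Taking the cone and the union over all $a\in A$ with $\|a\|\le\delta$ (positive homogeneity lets one rescale any candidate $b$ so that $\|P_Ab\|\le\delta$) should give
\begin{equation*}
\bigcup_{a\in A,\,\|a\|\le\delta}\pn{A}{B}(a)=P_{A^\perp}B,\qquad
\bigcup_{a\in A,\,\|a\|\le\delta}\pn{A}{X}(a)=A^\perp ,
\end{equation*}
and symmetrically for $B$; each of these is a linear subspace, so the sign in $-\pn{B}{\wt A}(b)$ is immaterial.

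With these identifications, the three CQ-numbers reduce to
\begin{align*}
\theta_\delta(A,A,B,B)&=\sup\menge{\scal{u}{v}}{u\in P_{A^\perp}B,\ v\in P_{B^\perp}A,\ \|u\|\le1,\ \|v\|\le1},\\
\theta_\delta(A,X,B,B)&=\sup\menge{\scal{u}{v}}{u\in P_{A^\perp}B,\ v\in B^\perp,\ \|u\|\le1,\ \|v\|\le1},\\
\theta_\delta(A,A,B,X)&=\sup\menge{\scal{u}{v}}{u\in A^\perp,\ v\in P_{B^\perp}A,\ \|u\|\le1,\ \|v\|\le1}.
\end{align*}
Since $P_{A^\perp}B\subseteq A^\perp$ and $P_{B^\perp}A\subseteq B^\perp$, enlarging a restricting set only enlarges the feasible cone, so $\theta_\delta(A,A,B,B)\le\theta_\delta(A,X,B,B)$ and $\theta_\delta(A,A,B,B)\le\theta_\delta(A,A,B,X)$. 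Hence it suffices to prove one lower bound $\theta_\delta(A,A,B,B)\ge c(A,B)$ and the two upper bounds $\theta_\delta(A,X,B,B)\le c(A,B)$ and $\theta_\delta(A,A,B,X)\le c(A,B)$; the three quantities are then squeezed to the common value $c(A,B)$. Writing $L:=A\cap B$, $A_0:=A\cap L^\perp$, $B_0:=B\cap L^\perp$, one has $c(A,B)=\sup\menge{\scal{a_0}{b_0}}{a_0\in A_0,\ b_0\in B_0,\ \|a_0\|\le1,\ \|b_0\|\le1}$. For the lower bound I would take unit $a_0\in A_0$, $b_0\in B_0$ attaining this supremum (compactness), set $u:=P_{A^\perp}b_0$, $v:=P_{B^\perp}a_0$, normalize, and verify by a planar computation in $\lspan\{a_0,b_0\}$ that $|\scal{u}{v}|=c(A,B)$, fixing the sign to $+$ since the cones are subspaces.

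The heart of the argument is the two upper bounds, and the clean way to organize the computation is the simultaneous orthogonal (CS / principal-angle) decomposition of $A_0$ and $B_0$: in finite dimensions $X$ splits into an orthogonal sum of $L$, of $A_0\cap B_0^\perp$ and $B_0\cap A_0^\perp$, of the doubly orthogonal remainder $(A+B)^\perp$, and of a family of mutually orthogonal two-dimensional subspaces $V_i$, each jointly invariant under $P_A,P_B,P_{A^\perp},P_{B^\perp}$, on which $A$ and $B$ meet $V_i$ in two lines enclosing a principal angle $\theta_i\in\left]0,\tfrac{\pi}{2}\right]$. Because all projections are block-diagonal for this decomposition and distinct blocks are orthogonal, each $\scal{u}{v}$ splits as a sum over blocks; on every two-dimensional block the situation reduces to the two-line planar case, where the restricted supremum evaluates to $\cos\theta_i$, while the degenerate one-dimensional blocks contribute $0$. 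Crucially, on the $(A+B)^\perp$ block both $u\in P_{A^\perp}B$ and $v\in P_{B^\perp}A$ vanish, which is exactly the effect of the restriction preventing the supremum from reaching $1$: a direct check gives $P_{A^\perp}B\cap B^\perp=\{0\}$ and $A^\perp\cap P_{B^\perp}A=\{0\}$. Summing the block contributions then yields the two upper bounds, and with the squeeze all three CQ-numbers equal $c(A,B)$. Finally $c(A,B)<1$ because $A_0\cap B_0=\{0\}$: were the supremum equal to $1$ it would be attained, by compactness, at unit $a_0=b_0\in A_0\cap B_0$, a contradiction. I expect the principal-angle reduction to be the main obstacle, since it is precisely what forces the two asymmetric (mixed) suprema down to $c(A,B)$ rather than merely below $1$; everything else is bookkeeping with orthogonal projections.
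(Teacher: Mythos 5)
Your proposal is correct in substance, but be aware that this paper never proves Theorem~\ref{t:CQn=c} at all: it is imported as ``a consolidation of'' \cite[Theorem~8.12 and Corollary~8.13]{BLPW12a}, so any proof you write is necessarily a different route from what the paper offers, which is only a citation. On its own merits your argument holds up. The first pillar, the identification
$\bigcup_{a\in A,\,\|a\|\le\dd}\pn{A}{B}(a)=P_{A^\perp}B$ (and $=A^\perp$ when the restricting set is $X$), is exactly right for linear subspaces: $P_A^{-1}a=a+A^\perp$ gives $\pn{A}{B}(a)=\cone\menge{P_{A^\perp}b}{b\in B,\ P_Ab=a}$, and positive rescaling inside the subspace $B$ makes the union independent of $\dd$ --- which also explains why $\theta_\dd$ does not depend on $\dd$. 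Note too that you paired the restrictions correctly ($\wt{B}$ restricts the cone to $A$, $\wt{A}$ the cone to $B$), which is easy to get backwards. The second pillar, the two-subspace (principal-angle) decomposition, is available since $X=\RR^n$, and it does force both mixed suprema down to $c(A,B)$: on the block $(A+B)^\perp$ the restricted set $P_{A^\perp}B$ (resp.\ $P_{B^\perp}A$) has zero component, so that block contributes nothing even when the other factor ranges over all of $B^\perp$ (resp.\ $A^\perp$), and the planar blocks contribute $\sum_i\mu_i\nu_i\cos\theta_i\le\big(\max_i\cos\theta_i\big)\|\mu\|\,\|\nu\|\le c(A,B)$.

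Three points need tightening. (a) The aggregation across blocks is precisely the Cauchy--Schwarz step just displayed; saying ``on each two-dimensional block the restricted supremum evaluates to $\cos\theta_i$'' does not by itself bound the global supremum, since a single pair $(u,v)$ has components in all blocks simultaneously. (b) In the lower bound, the planar computation in $\lspan\{a_0,b_0\}$ tacitly uses $P_Ab_0\in\RR a_0$; this is true for a maximizing pair (first-order optimality gives $P_{A}b_0=c(A,B)\,a_0$, equivalently the optimal pair lies in one of the blocks $V_i$), but it deserves a line of justification. (c) The literal first sentence of the theorem (affine $A,B$, CQ-number at $0$) is vacuous or false unless $0$ is near $A$ and $B$; your implicit reading --- linear subspaces for the first display, with the ``moreover'' handling affine subspaces through the translation identity \eqref{e:120405b} and $c(A,B)=c(\pa A,\pa B)$ --- is the right interpretation and matches the source. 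What your route buys is a self-contained, finite-dimensional proof with a transparent picture of exactly how the restriction excises the $(A^\perp\cap B^\perp)$-directions that would otherwise push the supremum to $1$; what the citation buys the authors is brevity and a statement valid within the Friedrichs-angle calculus developed in \cite{BLPW12a}.
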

An easy consequence of Theorem \ref{t:CQn=c} is the case of two distinct lines through the 
origin for which the CQ-number is simply the 
cosine of the angle between them (\cite[Proposition 7.3]{BLPW12a}). 
  \begin{corollary}[two distinct lines through the origin]
\label{p:CQn2l}
Suppose that $w_a$ and $w_b$ are two vectors in $X$ such that
$\|w_a\|=\|w_b\|=1$.
Let $A :=\RR w_a$, $B:= \RR w_b$, and $\dd>0$.
Assume that $A\cap B = \{0\}$.
Then the CQ-number at $0$ is
\begin{equation}
\theta_\dd(A,A,B,B)=\theta_\dd(A,X,B,B)=\theta_\dd(A,A,B,X)=c(A,B)=|\scal{w_a}{w_b}|<1.
\end{equation}
\end{corollary}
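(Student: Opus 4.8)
The plan is to reduce everything to Theorem~\ref{t:CQn=c}. Since $A = \RR w_a$ and $B = \RR w_b$ are one-dimensional linear subspaces, they are in particular affine subspaces of $X$ intersecting at $0$, so Theorem~\ref{t:CQn=c} applies directly and delivers the entire chain
\[
\theta_\dd(A,A,B,B) = \theta_\dd(A,X,B,B) = \theta_\dd(A,A,B,X) = c(A,B) < 1
\]
at once, including the strict inequality $c(A,B) < 1$. Thus the only genuinely new content in the corollary is the identification $c(A,B) = |\scal{w_a}{w_b}|$, and the remainder of the argument is a direct evaluation of the Friedrichs angle for two lines.

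To carry out that evaluation, I would first use the hypothesis $A \cap B = \{0\}$ to simplify the orthogonal complement appearing in the definition of $c(A,B)$: here $(A \cap B)^\perp = \{0\}^\perp = X$, so the side conditions $a \in A \cap (A\cap B)^\perp$ and $b \in B \cap (A\cap B)^\perp$ collapse to $a \in A$ and $b \in B$. Parametrizing $a = s w_a$ and $b = t w_b$ with $s,t\in\RR$, and using $\|w_a\| = \|w_b\| = 1$, the norm constraints $\|a\| \leq 1$ and $\|b\| \leq 1$ become $|s| \leq 1$ and $|t| \leq 1$. Since $|\scal{a}{b}| = |s|\,|t|\,|\scal{w_a}{w_b}|$, the supremum over $|s| \leq 1$, $|t| \leq 1$ is attained at $|s| = |t| = 1$, which yields $c(A,B) = |\scal{w_a}{w_b}|$.

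There is essentially no obstacle here: the substantive analytic work is already contained in Theorem~\ref{t:CQn=c}, and what remains is the one-line computation above. The single point that requires a moment's care is the reduction $(A \cap B)^\perp = X$, which is precisely what makes the Friedrichs angle between two distinct lines coincide with the ordinary cosine $|\scal{w_a}{w_b}|$ of the angle between their direction vectors. The hypothesis $A\cap B = \{0\}$ is genuinely needed: were the two lines to coincide, one would have $(A\cap B)^\perp = A^\perp$ and hence $c(A,B) = 0$, whereas $|\scal{w_a}{w_b}| = 1$. As a consistency check one may also note that the strict inequality $|\scal{w_a}{w_b}| < 1$ follows independently from Cauchy--Schwarz, since $A \cap B = \{0\}$ forces $w_a$ and $w_b$ to be linearly independent, so the equality case in Cauchy--Schwarz is excluded.
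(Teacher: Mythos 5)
Your proposal is correct and takes essentially the same route as the paper, which presents this corollary precisely as an immediate consequence of Theorem~\ref{t:CQn=c} applied to the two lines, leaving only the elementary evaluation of the Friedrichs angle. Your computation of that angle---using $(A\cap B)^\perp=\{0\}^\perp=X$ to reduce the supremum to $\sup_{|s|\leq 1,\,|t|\leq 1}|s|\,|t|\,|\scal{w_a}{w_b}|=|\scal{w_a}{w_b}|$---is exactly the intended filling-in of that step, so nothing further is needed.
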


Convergence of MAP requires also a certain regularity on neighborhoods of the corresponding fixed points.  
For this we used a notion of 
regularity of the sets that is an adaptation to restricted normal cones
of type of regularity introduced in \cite{LLM}.
\begin{definition}[(joint-) regularity and (joint-) superregularity]\label{d:reg}
Let $A$ and $B$ be nonempty subsets of $X$, 
let $\mcB := (B_j)_{j\in J}$ be a nontrivial collection of nonempty subsets of $X$,
and let $c\in X$.
\begin{enumerate}
\item
We say that $B$ is \emph{$(A,\ve,\dd)$-regular} at $c\in X$ if
$\ve\geq 0$, $\dd>0$, and
\begin{equation}
\label{e:dreg} \left.
\begin{array}{c}
(y,b)\in B\times B,\\
\|y-c\| \leq \dd,\|b-c\|\leq \dd,\\
u\in \pn{B}{A}(b)
\end{array}
\right\}\quad\Rightarrow\quad \scal{u}{y-b}\leq
\ve\|u\|\cdot\|y-b\|.
\end{equation}
If $B$ is $(X,\ve,\dd)$-regular at $c$, then
we also simply speak of $(\ve,\dd)$-regularity.
\item
The set $B$ is called $A$-\emph{superregular} at $c\in X$ if for
every $\ve>0$ there exists $\dd>0$ such that $B$ is
$(A,\ve,\dd)$-regular at $c$.
Again, if $B$ is $X$-superregular at $c$, then we also say
that $B$ is superregular at $c$.
\item We say that $\mcB$ is $(A,\ve,\dd)$-joint-regular at $c$
if $\ve\geq 0$, $\dd>0$, and for every $j\in J$,
$B_j$ is $(A,\ve,\dd)$-regular at $c$.  
\item
The collection $\mcB$ is $A$-joint-superregular at $c$
if for every $j\in J$, $B_j$ is $A$-superregular at $c$.
We omit the prefix $A$ if $A=X$.
\end{enumerate}
\end{definition}

Joint-(super)regularity can be easily checked by any of the following conditions.
\begin{proposition}
\label{p:jsreg}
Let $\mcA := (A_j)_{j\in J}$ and $\mcB := (B_j)_{j\in J}$
be nontrivial collections of nonempty subsets of $X$,
let $c\in X$, let $(\ve_j)_{j\in J}$ be a collection in $\RP$,
and let $(\dd_j)_{j\in J}$ be a collection in $\left]0,\pinf\right]$.
Set $A := \bigcap_{j\in J}A_j$,
$\ve := \sup_{j\in J}\ve_j$,
and $\dd := \inf_{j\in J}\dd_j$.
Then the following hold:
\begin{enumerate}
\item
\label{p:jsreg-i}
If $\dd>0$ and $(\forall j\in J)$ $B_j$ is $(A_j,\ve_j,\dd_j)$-regular
at $c$, then $\mcB$ is $(A,\ve,\dd)$-joint-regular at $c$.
\item
\label{p:jsreg-ii}
If $J$ is finite and
$(\forall j\in J)$ $B_j$ is $(A_j,\ve_j,\dd_j)$-regular at $c$,
then $\mcB$ is $(A,\ve,\dd)$-joint-regular at $c$.
\item
\label{p:jsreg-iii}
If $J$ is finite and
$(\forall j\in J)$ $B_j$ is $A_j$-superregular at $c$,
then $\mcB$ is $A$-joint-superregular at $c$.
\end{enumerate}

If in addition $\mcB := (B_j)_{j\in J}$ is a nontrivial collection of nonempty
{\em convex} subsets of $X$ then, for ${A}\subseteq X$,
$\mcB$ is $(0,\pinf)$-joint-regular, $(A,0,\pinf)$-joint-regular,
joint-superregular, and $A$-joint-superregular at $c\in X$.

\end{proposition}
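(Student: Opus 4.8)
The plan is to reduce everything to one monotonicity property of the restricted proximal normal cone in its \emph{restriction} argument, which is immediate from the defining formula \eqref{e:pnB}. First I would record that whenever $A\subseteq A'$ and $b\in B$, one has $\pn{B}{A}(b)\subseteq\pn{B}{A'}(b)$, because $\pn{B}{A}(b)=\cone\big((A\cap P_B^{-1}b)-b\big)$ and $A\cap P_B^{-1}b\subseteq A'\cap P_B^{-1}b$. Applying this with $A=\bigcap_{j\in J}A_j\subseteq A_j=A'$ gives $\pn{B_j}{A}(b)\subseteq\pn{B_j}{A_j}(b)$ for every $j$. Since regularity is a statement quantified universally over the vectors $u$ in the relevant cone, shrinking the cone only weakens the requirement; I expect this ``direction of the inequality'' bookkeeping (smaller restriction set $\Rightarrow$ smaller cone $\Rightarrow$ easier regularity) to be the only genuinely error-prone point, so I would state it explicitly before using it.

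For \ref{p:jsreg-i}, fix $j\in J$ and take $(y,b)\in B_j\times B_j$ with $\|y-c\|\leq\dd$, $\|b-c\|\leq\dd$ and $u\in\pn{B_j}{A}(b)$. Since $\dd=\inf_{j}\dd_j\leq\dd_j$, the points $y,b$ lie in the $\dd_j$-ball about $c$, and by the monotonicity $u\in\pn{B_j}{A_j}(b)$; hence $(A_j,\ve_j,\dd_j)$-regularity of $B_j$ yields $\scal{u}{y-b}\leq\ve_j\|u\|\cdot\|y-b\|\leq\ve\|u\|\cdot\|y-b\|$, using $\ve_j\leq\ve=\sup_j\ve_j$. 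As $j$ was arbitrary and $\dd>0$ by hypothesis, $\mcB$ is $(A,\ve,\dd)$-joint-regular. Item \ref{p:jsreg-ii} is then immediate: when $J$ is finite the infimum $\dd=\min_j\dd_j$ of finitely many strictly positive numbers is strictly positive, so \ref{p:jsreg-i} applies. For \ref{p:jsreg-iii}, fix $j$ and $\ve>0$; $A_j$-superregularity provides $\dd>0$ with $B_j$ being $(A_j,\ve,\dd)$-regular, and the same single-set monotonicity argument upgrades this to $(A,\ve,\dd)$-regularity, so each $B_j$ is $A$-superregular and thus $\mcB$ is $A$-joint-superregular. (Here finiteness of $J$ is not actually needed, but one may alternatively invoke \ref{p:jsreg-ii} with the common value $\ve$ to produce a uniform $\dd>0$.)

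For the convex addendum, the key is the variational characterization of the projection onto a convex set. If each $B_j$ is convex and $u\in\pn{B_j}{X}(b)=\pnX{B_j}(b)$, then $u=\rho(x-b)$ for some $\rho\geq 0$ and some $x$ with $P_{B_j}x=b$; convexity gives $\scal{x-b}{y-b}\leq 0$ for all $y\in B_j$, whence $\scal{u}{y-b}\leq 0$ for all $y\in B_j$. Since no restriction on $\|y-c\|$ or $\|b-c\|$ was used, this is precisely $(X,0,\pinf)$-regularity, i.e.\ $(0,\pinf)$-regularity, and joint-regularity over $J$ follows at once. Applying the monotonicity with $A\subseteq X$ converts $(0,\pinf)$-regularity into $(A,0,\pinf)$-regularity, giving $(A,0,\pinf)$-joint-regularity. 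Finally, a set that is $(X,0,\pinf)$-regular is trivially $(X,\ve,\dd)$-regular for every $\ve\geq 0$ and $\dd>0$, since the right-hand side $\ve\|u\|\cdot\|y-b\|$ is nonnegative; hence each $B_j$ is superregular and $A$-superregular, yielding joint-superregularity and $A$-joint-superregularity. Throughout, the main obstacle is simply to keep the containment $\pn{B}{A}(b)\subseteq\pn{B}{A'}(b)$ and its effect on the quantified regularity inequality pointed in the correct direction.
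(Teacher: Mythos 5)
Your proof is correct and complete. For the record, this paper states Proposition~\ref{p:jsreg} without proof (it is quoted from \cite{BLPW12a}), so there is no in-paper argument to compare against; your route---reducing everything to the isotonicity of $\pn{B}{A}(b)$ in the restriction set $A$, which is immediate from $\pn{B}{A}(b)=\cone\big((A\cap P_B^{-1}b)-b\big)$---is the natural one, and the bookkeeping with $\dd=\inf_{j\in J}\dd_j$ and $\ve=\sup_{j\in J}\ve_j$ is done in the right directions. Your handling of the convex addendum via the variational characterization of nearest points in a convex set ($\scal{x-b}{y-b}\leq 0$ for all $y\in B_j$, which as you implicitly use requires no closedness of $B_j$) is also correct and yields $(X,0,\pinf)$-regularity, from which all four claimed properties follow by your monotonicity fact and the nonnegativity of $\ve\|u\|\cdot\|y-b\|$. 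Two of your side remarks are worth affirming: finiteness of $J$ is needed only to force $\inf_{j\in J}\dd_j>0$ in \ref{p:jsreg-ii}, and it is indeed superfluous in \ref{p:jsreg-iii}, because $A$-joint-superregularity as defined in Definition~\ref{d:reg} is a per-index condition with no uniformity in $\dd$ across $j\in J$.
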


The framework of restricted normal cones allows for a great deal of 
flexibility in how one decomposes problems.  Whatever the chosen
decomposition, the following properties will be required.
\boxedeqn{
\label{e:MAPsettings}
\left\{
\begin{aligned}
&\text{$\mcA := (A_i)_{i\in I}$ and $\mcB := (B_j)_{j\in J}$ are
nontrivial collections}\\
&\quad\text{of nonempty closed subsets of $X$;}\\
&A :=\bigcup_{i\in I} A_i \text{~and~}
B:= \bigcup_{j\in J} B_j\text{~are closed;}\\
&c\in A\cap B; \\
&\text{$\wt{\mcA} := (\wt{A}_i)_{i\in I}$ and
$\wt{\mcB} := (\wt{B}_j)_{j\in J}$ are collections}\\
&\quad\text{of nonempty subsets of $X$ such that }\\
&\qquad (\forall i\in I)\;\;P_{A_i}\big((\bd B)\smallsetminus
A\big)\subseteq\wt{A}_i,\\
&\qquad (\forall j\in J)\;\;P_{B_j}\big((\bd A)\smallsetminus
B\big)\subseteq\wt{B}_j;\\
&\wt{A} :=\bigcup_{i\in I} \wt{A}_i \text{~and~}
\wt{B}:= \bigcup_{j\in J} \wt{B}_j.
\end{aligned}
\right.
}

With the above assumptions one can establish rates of convergence for the MAP algorithms.
\begin{theorem}[convergence rate, Corollary 10.8 of \cite{BLPW12a}]
\label{p:jdb}
Assume that \eqref{e:MAPsettings} holds and
that there exists $\dd>0$ such that
\begin{enumerate}
\item $\mcA$ is $(\wt{B},0,3\dd)$-joint-regular at $c$;
\item $\mcB$ is $(\wt{A},0,3\dd)$-joint-regular at $c$; and
\item $\theta<1$, where
$\theta:=\theta_{3\dd}$ is
the joint-CQ-number at $c$ associated with
$(\mcA,\wt{\mcA},\mcB,\wt{\mcB})$
(see Definition~\ref{d:CQn}).
\end{enumerate}
Suppose also that the starting point of the MAP $b_{-1}$ satisfies
$\|b_{-1}-c\|\leq\frac{(1-\theta)\dd}{6(2-\theta)}$.
Then $(a_k)_\kkk$ and $(b_k)_\kkk$ converge linearly
to some point in $\bar{c}\in A\cap B$ with $\|\bar{c}-c\|\leq\dd$
and rate $\theta^2$; in fact,
\begin{equation}
(\forall k\geq1)\quad
\max\big\{\|a_k-\bar{c}\|,\|b_k-\bar{c}\|\big\}\leq
\frac{\dd}{2-\theta}\big(\theta^2\big)^{k-1}.
\end{equation}
\end{theorem}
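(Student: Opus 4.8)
The plan is to extract a one-projection contraction from the two regularity hypotheses (i)–(ii) and the CQ-number hypothesis (iii), and then iterate it. Write $g_k:=\|a_k-b_k\|$ and $h_k:=\|b_k-a_{k+1}\|$ for the successive projection residuals generated by \eqref{e:MAP}, and let $A_{i^*}\ni a_{k+1}$ and $B_{j^*}\ni b_k$ denote the active pieces. The first task is to recognize the residuals as \emph{restricted} proximal normals: since $a_{k+1}=P_Ab_k$ we have $b_k-a_{k+1}\in\pnX{A_{i^*}}(a_{k+1})$, and similarly $a_k-b_k\in\pnX{B_{j^*}}(b_k)$, and I would upgrade these to $b_k-a_{k+1}\in\pn{A_{i^*}}{\wt{B}}(a_{k+1})$ and $a_k-b_k\in\pn{B_{j^*}}{\wt{A}}(b_k)$. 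This upgrade is exactly what the inclusions $P_{A_i}\big((\bd B)\smallsetminus A\big)\subseteq\wt{A}_i$ and $P_{B_j}\big((\bd A)\smallsetminus B\big)\subseteq\wt{B}_j$ in \eqref{e:MAPsettings} are engineered to supply: the points of $B$ (resp.\ $A$) whose projections generate these normals are boundary points captured by $\wt{B}$ (resp.\ $\wt{A}$).

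With the cones identified, I would split $a_k-b_k=(a_k-b_{k-1})+(b_{k-1}-b_k)$ and take the inner product with $a_k-b_k$. The term $\scal{a_k-b_k}{b_{k-1}-b_k}\le 0$ by the $(\wt{A},0,3\delta)$-joint-regularity of $\mcB$ (Definition~\ref{d:reg} with $\ve=0$: a normal of $B$ at $b_k$ tested against a secant lying inside $B$), while $\scal{a_k-b_k}{a_k-b_{k-1}}\le\theta\,g_k\,\|b_{k-1}-a_k\|$ is precisely a CQ-number estimate (Definition~\ref{d:CQn}), applied to $b_{k-1}-a_k\in\pn{A}{\wt{B}}(a_k)$ and $b_k-a_k\in-\pn{B}{\wt{A}}(b_k)$. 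Cancelling one factor $g_k$ yields $g_k\le\theta\,\|b_{k-1}-a_k\|=\theta\,h_{k-1}$; the symmetric computation using the $(\wt{B},0,3\delta)$-joint-regularity of $\mcA$ gives $h_k\le\theta\,g_k$. Chaining these produces $g_k\le\theta^2 g_{k-1}$, so the residuals decay geometrically with ratio $\theta^2$ per cycle. Here I used $\theta=\theta_{3\delta}$ and that the base points $a_k,b_k,a_{k+1}$ lie within $3\delta$ of $c$, which is exactly where the neighborhood sizes in (i)–(iii) are consumed.

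The remaining work is confinement, existence of the limit, and the explicit rate. An induction shows every iterate stays in $\ball{c}{3\delta}$, keeping the $3\delta$-neighborhood hypotheses applicable: the total path length from $b_{-1}$ is bounded by $\sum_k(g_k+h_k)$, a geometric series of ratio $\theta$ summing to a multiple of $\|b_{-1}-c\|/(1-\theta)$, and the stated bound $\|b_{-1}-c\|\le(1-\theta)\delta/\big(6(2-\theta)\big)$ is calibrated so that this stays below $3\delta$ and the limit falls inside $\ball{c}{\delta}$. The geometric decay of $g_k$ and $h_k$ makes $(a_k)_\kkk$ and $(b_k)_\kkk$ Cauchy with a common limit $\bar{c}$; since $g_k\to 0$ and $A,B$ are closed, $\bar{c}\in A\cap B$. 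Bounding the tail of the geometric series then delivers $\max\{\|a_k-\bar{c}\|,\|b_k-\bar{c}\|\}\le\frac{\delta}{2-\theta}(\theta^2)^{k-1}$.

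The main obstacle is the gap between per-piece regularity and the cross-piece secants. Joint-regularity (Definition~\ref{d:reg}, Proposition~\ref{p:jsreg}) is a statement about each $A_i$ and each $B_j$ separately, whereas the secants $b_{k-1}-b_k$ and $a_k-a_{k+1}$ appearing in the contraction step connect iterates that need not lie in a common active piece — and the union $A$ (resp.\ $B$) is typically \emph{not} regular at $c$, as two crossing lines already show. Reconciling this is the crux: one must argue, again via the boundary-projection structure of \eqref{e:MAPsettings} and the resulting membership of the relevant comparison point in the active piece, that the per-piece regularity inequality is legitimately applicable along the iteration. I expect this bookkeeping, rather than the contraction algebra or the geometric-series estimates, to carry the real difficulty.
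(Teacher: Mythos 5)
First, a remark on what ``the paper's own proof'' is here: this paper never proves Theorem~\ref{p:jdb} at all --- it is imported verbatim as Corollary~10.8 of \cite{BLPW12a} --- so the only proof to measure against is the one in that companion paper. Your overall architecture does match that argument: use the inclusions in \eqref{e:MAPsettings} to certify the memberships $b_k-a_{k+1}\in\pn{A_{i^*}}{\wt{B}}(a_{k+1})$ and $a_k-b_k\in\pn{B_{j^*}}{\wt{A}}(b_k)$, extract a factor $\theta$ per half-step from the CQ-number together with $\ve=0$ regularity, chain to $\theta^2$ per cycle, and finish with a confinement induction and a geometric series. The problem is the step you yourself flag as ``the main obstacle'' and then defer: as written, your contraction applies the regularity inequality of Definition~\ref{d:reg} to the secants $b_{k-1}-b_k$ and $a_k-a_{k+1}$ joining \emph{consecutive iterates}, but the definition requires both endpoints to lie in the \emph{same} piece ($(y,b)\in B_j\times B_j$), and consecutive iterates need not do so. This is not repairable bookkeeping via the $\wt{\phantom{A}}$-inclusions of \eqref{e:MAPsettings}: those inclusions only deliver the cone memberships above; they say nothing about which piece the other endpoint of your secant lies in. Moreover the inequality you need is genuinely false for the union: if $B=B_1\cup B_2$ is two lines crossing at $c$, each piece is convex and hence $(0,\pinf)$-regular, yet a proximal normal to $B_2$ at $b_k$ can make an acute angle (with inner product comparable to the product of norms) with the secant to a point $b_{k-1}\in B_1$. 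That is exactly why the theorem is phrased with \emph{joint}-regularity rather than regularity of $A$ and $B$, and why your step, as stated, uses a hypothesis the theorem does not grant.

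The device that closes the gap --- and which the argument behind Corollary~10.8 of \cite{BLPW12a} effectively runs on --- is to never form a secant between points of different pieces. To bound $d_A(b_k)$, use the piece $A_i$ that was active at the \emph{previous} half-step ($a_k=P_{A_i}b_{k-1}$), observe $d_A(b_k)\leq d_{A_i}(b_k)=\|b_k-a'\|$ for the auxiliary point $a':=P_{A_i}b_k$, and split
\[
\|b_k-a'\|^2=\scal{b_k-a'}{b_k-a_k}+\scal{b_k-a'}{a_k-a'}.
\]
Both $a_k$ and $a'$ lie in $A_i$, so the $(\wt{B},0,3\dd)$-regularity of the single piece $A_i$ annihilates the second term (the membership $b_k-a'\in\pn{A_i}{\wt{B}}(a')$ again comes from $b_k\in\wt{B}$), while the first term is a legitimate CQ-number pairing of $b_k-a'\in\pn{A_i}{\wt{B}}(a')$ with $b_k-a_k\in-\pn{B_{j^*}}{\wt{A}}(b_k)$; this yields $d_A(b_k)\leq\theta\|b_k-a_k\|$, and the true iterate inherits it because $\|b_k-a_{k+1}\|=d_A(b_k)\leq\|b_k-a'\|$. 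The symmetric estimate with the previously active $B$-piece gives $\|a_k-b_k\|=d_B(a_k)\leq\theta\|a_k-b_{k-1}\|$, whence the $\theta^2$ rate. Two further points of care: the very first step cannot use the CQ estimate at all, since the arbitrary starting point $b_{-1}$ need not lie in $\wt{B}$ (use instead $d_B(a_0)\leq\|a_0-b_{-1}\|+\|b_{-1}-c\|\leq 2\|b_{-1}-c\|$, which is where the starting-point constant enters); and the confinement induction must verify that the auxiliary points $a'$, not just the iterates, stay in $\ball{c}{3\dd}$ so that regularity and the CQ-number remain applicable. With these replacements your remaining steps (Cauchy sequence, closedness of $A$ and $B$, tail-sum constants) go through.
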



\section{Sparse feasibility with an affine constraint}
\label{s:russell}


We now move to the application of feasibility with a sparsity set and an affine subspace, 
problem \eqref{e:sparse feas}.  Our main result on the convergence of MAP 
is given in Theorem \eqref{t:mainsparse}.  Along the way we develop explicit 
representations of the projections, normal cones, and tangent cones to the 
sparsity set \eqref{e:AB} and motivate our decomposition of the problem.   

\subsection*{Properties of sparsity sets}
\begin{lemma}
\label{l:0314a}
Let $x$ and $y$ be in $\RR^\dm$, and let $\lambda\in\RR$.
Then the following hold:
\begin{enumerate}
\item
\label{l:0314a1}
$\supp(x) = \lspan\menge{e_i}{i\in I(x)}$ and
$\|x\|_0 = \card(I(x))=\dim \supp(x)$.
\item
\label{l:0314a2}
$x\in\supp(y)$
$\Leftrightarrow$
$I(x)\subseteq I(y)$
$\Leftrightarrow$
$\supp(x)\subseteq \supp(y)$
$\Rightarrow$
$\|x\|_0\leq\|y\|_0$.
\item
\label{l:0314a2+}
$I(x+y)\subseteq I(x)\cup I(y)$ and
$\displaystyle I(\lambda x) = \begin{cases}I(x), &\text{if $\lambda\neq 0$;}\\
\varnothing, &\text{otherwise.}\end{cases}$
\item
\label{l:0314a3}
$I((1-\lambda)x+\lambda y)\subseteq I(x)\cup I(y)$.
\item
\label{l:0314a3+-}
$\supp(\lambda x)=\lambda\supp(x)$ and $\|\lambda
x\|_0=|\sgn(\lambda)|\cdot\|x\|_0$.
\item
\label{l:0314a3+}
$\supp(x+y)\subseteq\supp(x)+\supp(y)$ and $\|x+y\|_0\leq \|x\|_0 + \|y\|_0$.
\item
\label{l:0314a4-}
If $\supp(x)\subseteq\supp(y)$ and $z\in\supp(y)$,
then there exist $u$ and $v$ in $\RR^\dm$ such that
$z=u+v$, $u\in\supp(x)$ and $\|v\|_0\leq \|y\|_0-\|x\|_0$.
\item
\label{l:0314a4}
Let $\delta\in\left]0,\min\menge{|x_i|}{i\in I(x)}\right[$ and
$y\in x+[-\dd,+\dd]^\dm$,
then $\supp(x)\subseteq\supp(y)$.
\item
\label{l:0314a5}
If $I(x)\nsubseteq I(y)$ and $I(y)\nsubseteq I(x)$, then
\begin{equation}
\|x+y\|^2\geq\min_{i\in I(x)\smallsetminus I(y)}|x_i|^2
+\min_{j\in I(y)\smallsetminus I(x)}|y_j|^2
\geq\min_{i\in I(x)}|x_i|^2+\min_{j\in I(y)}|y_j|^2.
\end{equation}
\item
\label{l:0314a6}
$\|\cdot\|_0$ is lower semicontinuous.
\end{enumerate}
\end{lemma}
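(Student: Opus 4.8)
The plan is to prove (i) first as the structural backbone and then derive most of the remaining items from it together with elementary coordinatewise observations. For (i), the definition \eqref{e:supp} says $z\in\supp(x)$ exactly when $z$ is supported on the index set $I(x)$ from \eqref{e:120411a}, so $\supp(x)$ is the coordinate subspace $\lspan\menge{e_i}{i\in I(x)}$; its dimension is $\card(I(x))$, which by the definition of $\|\cdot\|_0$ equals $\|x\|_0$. From here (ii) is immediate: $x\in\supp(y)\Leftrightarrow I(x)\subseteq I(y)$ by the defining condition, which via (i) is equivalent to the subspace inclusion $\supp(x)\subseteq\supp(y)$, and the cardinality inequality then yields $\|x\|_0\leq\|y\|_0$.

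Items (iii)--(vi) I would dispatch as a short coordinatewise block. The containment $I(x+y)\subseteq I(x)\cup I(y)$ holds because $(x+y)_i\neq 0$ forces $x_i\neq 0$ or $y_i\neq 0$, and the formula for $I(\lambda x)$ is read off from $(\lambda x)_i=\lambda x_i$; then (iv) is (iii) applied to $(1-\lambda)x$ and $\lambda y$, (v) follows because $\supp(x)$ is a subspace (so $\lambda\supp(x)=\supp(x)$ for $\lambda\neq 0$ and $\{0\}$ otherwise) together with $|\sgn(\lambda)|\in\{0,1\}$, and (vi) combines $I(x+y)\subseteq I(x)\cup I(y)$ with (i) and subadditivity of cardinality under unions.

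The two items needing a genuine construction are (vii) and (ix). For (vii), I would take $u$ to be the restriction of $z$ to the coordinates in $I(x)$ and $v:=z-u$; then $u\in\supp(x)$ by construction, $v$ is supported on $I(z)\smallsetminus I(x)\subseteq I(y)\smallsetminus I(x)$ (using $I(z)\subseteq I(y)$ from $z\in\supp(y)$ and $I(x)\subseteq I(y)$ from the hypothesis), and counting gives $\|v\|_0\leq\card(I(y))-\card(I(x))=\|y\|_0-\|x\|_0$. For (ix), the key observation is that $I(x)\smallsetminus I(y)$ and $I(y)\smallsetminus I(x)$ are disjoint, with $(x+y)_i=x_i$ on the first and $(x+y)_j=y_j$ on the second; discarding all other coordinates from $\|x+y\|^2$ leaves the two sums, each bounded below by its minimum term (both index sets nonempty by the hypothesis), and the last inequality is monotonicity of $\min$ under enlarging the index set.

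Finally, (viii) and (x) go together. For (viii), $i\in I(x)$ gives $|x_i|>\dd\geq|y_i-x_i|$, so $y_i\neq 0$, whence $I(x)\subseteq I(y)$ and (ii) yields $\supp(x)\subseteq\supp(y)$. Then (x) is immediate: given $x$, pick $\dd\in\left]0,\min_{i\in I(x)}|x_i|\right[$; for $y$ within $\dd$ of $x$, (viii) gives $\supp(x)\subseteq\supp(y)$, hence $\|y\|_0\geq\|x\|_0$ by (ii), which is exactly lower semicontinuity at $x$. I anticipate no real obstacle; the only places demanding care are the index-set bookkeeping in (vii) and the disjointness argument in (ix), so I would record those relations explicitly before counting.
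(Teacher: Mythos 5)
Your proposal is correct, and for items (i)--(ix) it follows essentially the same route as the paper: the paper dismisses (i)--(v) as immediate from the definitions, proves (vi) from (iii) plus cardinality, proves (vii) by exactly your decomposition (the paper writes $\supp(y)=\supp(x)\oplus\lspan\menge{e_i}{i\in J}$ with $J=I(y)\smallsetminus I(x)$ and splits $z$ accordingly, which is precisely your ``restrict $z$ to $I(x)$ and subtract''), proves (viii) by the same triangle-inequality estimate, and proves (ix) by picking single indices $i_0\in I(x)\smallsetminus I(y)$, $j_0\in I(y)\smallsetminus I(x)$ rather than keeping whole coordinate blocks --- a cosmetic difference only. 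The one genuine divergence is item (x): the paper proves lower semicontinuity globally, by observing that each sublevel set $\menge{z\in X}{\|z\|_0\leq\rho}$ equals $\bigcup_{J\in\mcJ_r}A_J$ with $r=\lfloor\rho\rfloor$, a finite union of closed linear subspaces and hence closed (this borrows the decomposition notation introduced later in the paper, and that closedness fact is reused there). You instead argue pointwise: choose $\dd\in\left]0,\min_{i\in I(x)}|x_i|\right[$ and invoke (viii) and (ii) to get $\|y\|_0\geq\|x\|_0$ on a neighborhood of $x$. Your version is more self-contained and arguably cleaner as a proof of lsc alone; the paper's version buys the stronger structural statement that sublevel sets of $\|\cdot\|_0$ are finite unions of subspaces. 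One small point to record in your write-up: your argument for (x) needs $I(x)\neq\emptyset$ for the interval defining $\dd$ to be nonempty; the excluded case $x=0$ is trivial, since then $\|y\|_0\geq 0=\|x\|_0$ for every $y$.
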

\begin{proof}
\ref{l:0314a1}--\ref{l:0314a3+-}:
These follow readily from the definitions.

\ref{l:0314a3+}:
By \ref{l:0314a2}, $I(x+y)\subseteq I(x)\cup I(y)$.
Hence $\supp(x+y)\subseteq\supp(x)+\supp(y)$; on the other hand,
taking cardinality and using \ref{l:0314a1} yields
$\|x+y\|_0\leq\|x\|_0+\|y\|_0$.

\ref{l:0314a4-}:
By \ref{l:0314a2}, we have $I(x)\subseteq I(y)$.
Write $I(y)=I(x)\dotcup J$ as disjoint union, where
$J=I(y)\smallsetminus I(x)$, and note that
that $\card(J)=\card(I(y))-\card(I(x))= \|y\|_0-\|x\|_0$.
Then $\supp(y) = \supp(x)\oplus \lspan\menge{e_i}{i\in J}$.
Now since $z\in\supp(y)$, we can write $z=u+v$,
where $u\in\supp(x)$ and $v\in \lspan\menge{e_i}{i\in J}$
and $\|v\|_0\leq\card(J)=\|y\|_0-\|x\|_0$.

\ref{l:0314a4}:
If $i\in I(x)$, then $|y_i|\geq |x_i|-|x_i-y_i|>\dd-|x_i-y_i|\geq 0$ and
hence $y_i\neq 0$. It follows that $I(x)\subseteq I(y)$. Now apply \ref{l:0314a2}.

\ref{l:0314a5}:
Let $i_0\in I(x)\smallsetminus I(y)$ and $j_0\in I(y)\smallsetminus I(x)$.
Then $y_{i_0}=0$ and $x_{j_0}=0$, and hence
\begin{subequations}
\begin{align}
\|x+y\|^2&\geq|x_{i_0}+y_{i_0}|^2+|x_{j_0}+y_{j_0}|^2\\
&\geq\min_{i\in I(x)\smallsetminus I(y)}|x_i|^2+
\min_{j\in I(y)\smallsetminus I(x)}|y_j|^2\\
&\geq \min_{i\in I(x)}|x_i|^2+\min_{j\in I(y)}|y_j|^2,
\end{align}
\end{subequations}
as claimed.

\ref{l:0314a6}:
Indeed, borrowing the notation below,
we see that $\menge{z\in X}{\|z\|_0\leq \rho}=
\bigcup_{J\in\mcJ_r} A_J$, where $r = \lfloor \rho\rfloor$,
is closed as a union of finitely many (closed) linear subspaces.
\end{proof}

In order to apply Theorem \ref{p:jdb} to MAP for solving \eqref{e:sparse feas} we must choose 
a suitable decomposition, $\mcA$ and $\mcB$, and restrictions, $\wt{\mcA}$ and $\wt{\mcB}$, 
and verify the assumptions of the theorem.   We now abbreviate
\begin{subequations}
\label{e:russell}
\begin{equation}
\mcJ :=  2^{\{1,2,\ldots,\dm\}}
\quad\text{and}\quad
\mcJ_s := \mcJ(s) := \menge{J\in\mcJ}{\card(J)=s}
\end{equation}
and set
\begin{equation}
(\forall J\in\mcJ)\quad
A_J := \lspan\menge{e_j}{j\in J}.
\end{equation}
Define the collections
\begin{equation}
\mcA := \wt{\mcA}:= (A_J)_{J\in\mcJ_s}
\quad\text{and}\quad
\mcB:=\wt{\mcB} := (B).
\end{equation}
Clearly,
\begin{equation}
A:=\wt{A}:=\bigcup_{J\in \mcJ_s} A_J = \menge{x\in \RR^\dm}{\|x\|_0\leq s} 
\quad\text{and}\quad
B= \wt{B} := \menge{x\in X}{Mx=p}.
\end{equation}
\end{subequations}


The proofs of the following two results are elementary and thus omitted.

\begin{proposition}[properties of $A_J$]\label{p:AJbasic}
Let $J$, $J_1$, and $J_2$ be in $\mcJ$, and let
$x\in X$.
Then the following hold:
\begin{enumerate}
\item\label{p:AJbasic-i}
$A_{J_1}\cup A_{J_2}\subseteq A_{J_1\cup J_2}=\lspan(A_{J_1}\cup A_{J_2})$.
\item\label{p:AJbasic-ii}
$J_1\subseteq J_2$ $\Leftrightarrow$ $A_{J_1}\subseteq A_{J_2}$.
\item\label{p:AJbasic-ii+}
$x\in A_{I(x)}=\supp(x)$.
\item\label{p:AJbasic-iii}
$I(x)\subseteq J$ $\Leftrightarrow$ $x\in A_J$.
\item\label{p:AJbasic-iv}
$I(x)\cap J=\emp$ $\Leftrightarrow$ $x\in A_J^\perp$.
\item $s\leq \dm-1$ $\Leftrightarrow$ $\inte A=\varnothing$. 
\end{enumerate}
\end{proposition}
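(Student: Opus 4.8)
The plan is to reduce every part to the coordinate structure of $A_J=\lspan\menge{e_j}{j\in J}$. The one observation that drives the routine items is the coordinatewise membership criterion: for $x\in X$ one has $x\in A_J$ if and only if $x_i=0$ for every $i\notin J$, equivalently $I(x)\subseteq J$; in particular $e_j\in A_J\Leftrightarrow j\in J$. Part \ref{p:AJbasic-iii} is exactly this criterion. Granting it, I would dispatch the remaining routine parts as follows. For \ref{p:AJbasic-i}, the generator sets of $A_{J_1}$ and $A_{J_2}$ both sit inside that of $A_{J_1\cup J_2}$, giving the inclusion $A_{J_1}\cup A_{J_2}\subseteq A_{J_1\cup J_2}$; the equality $A_{J_1\cup J_2}=\lspan(A_{J_1}\cup A_{J_2})$ then follows because $A_{J_1}\cup A_{J_2}$ contains every $e_j$ with $j\in J_1\cup J_2$ (forcing $\supseteq$) while $A_{J_1\cup J_2}$ is a subspace containing $A_{J_1}\cup A_{J_2}$ (forcing $\subseteq$). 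For \ref{p:AJbasic-ii}, the forward implication is nestedness of generators and the converse is the criterion applied to each $e_j$, $j\in J_1$. For \ref{p:AJbasic-ii+}, write $x=\sum_{i\in I(x)}x_ie_i\in A_{I(x)}$ and identify $A_{I(x)}=\supp(x)$ via Lemma~\ref{l:0314a}\ref{l:0314a1}. For \ref{p:AJbasic-iv}, note $A_J^\perp=\lspan\menge{e_i}{i\notin J}$, so $x\in A_J^\perp$ iff $x_j=0$ for all $j\in J$, i.e.\ $I(x)\cap J=\varnothing$.

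The only part that is not pure definition-chasing is the last equivalence (vi), which I regard as the main point. If $s=\dm$, then $\mcJ_s=\big\{\{1,\ldots,\dm\}\big\}$ and $A=A_{\{1,\ldots,\dm\}}=X$, so $\inte A=X\neq\varnothing$; this is the contrapositive of one direction. Conversely, if $s\leq\dm-1$, then $A=\bigcup_{J\in\mcJ_s}A_J$ is a \emph{finite} union of \emph{proper} linear subspaces (each of dimension $s<\dm$). Each such subspace has empty interior, and the cleanest way to conclude is measure-theoretic: a proper subspace is Lebesgue-null, a finite union of null sets is null, and no nonempty open set is null, so $A$ contains no open ball and $\inte A=\varnothing$.

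The genuinely nontrivial step is thus exactly this last one — that a finite union of proper subspaces has empty interior. It is standard, but worth isolating, and admits a purely topological alternative to the measure argument: each complement $X\smallsetminus A_J$ is open and dense (a proper subspace is nowhere dense), and a finite intersection of dense open sets is dense, so $X\smallsetminus A$ is dense and $A$ has empty interior. Everything else is an immediate consequence of the coordinatewise criterion, which is why the details can reasonably be omitted.
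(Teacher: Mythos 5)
Your proof is correct, and in fact the paper offers no proof to compare it with: Proposition~\ref{p:AJbasic} is stated with the remark that its proof is ``elementary and thus omitted,'' so your write-up simply supplies what the authors left to the reader. Your route is the natural one: parts \ref{p:AJbasic-i}--\ref{p:AJbasic-iv} all reduce to the coordinatewise membership criterion $x\in A_J\Leftrightarrow I(x)\subseteq J$ (together with $A_J^\perp=\lspan\menge{e_i}{i\notin J}$ for \ref{p:AJbasic-iv}), and you correctly isolate the only substantive point, namely item (vi): when $s\leq\dm-1$ the set $A=\bigcup_{J\in\mcJ_s}A_J$ is a finite union of proper linear subspaces, and such a union has empty interior. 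Both of your justifications for that step are sound --- the measure-theoretic one (proper subspaces are Lebesgue-null, finite unions of null sets are null, nonempty open sets are not) and the topological one (complements of proper subspaces are open and dense, and a finite intersection of dense open sets is dense, which needs no Baire-category machinery). The converse direction, $s=\dm\Rightarrow A=X$, is handled correctly as well.
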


\begin{proposition}
\label{p:projAJ}
Let $J\in\mcJ$, let $x=(x_1,\ldots,x_\dm)\in X$,
and set $y := P_{A_J}x$.
Then
\begin{equation}\label{e:projAJ}
(\forall i\in \{1,\ldots,\dm\})\quad
y_i = \begin{cases}
x_i, &\text{if $i\in J$;}\\
0, &\text{if $i\notin J$,}
\end{cases}
\end{equation}
and
\begin{equation}\label{e:distAJ}
d^2_{A_J}(x)=\sum_{j\in\{1,\ldots,n\}\smallsetminus J}|x_j|^2
=\sum_{j\in I(x)\smallsetminus J}|x_j|^2.
\end{equation}
\end{proposition}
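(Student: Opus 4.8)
The plan is to rely on the variational characterization of the orthogonal projection onto a closed linear subspace. Since $A_J$ is a linear subspace of the finite-dimensional space $X=\RR^\dm$, the vector $y=P_{A_J}x$ is the \emph{unique} point satisfying both $y\in A_J$ and $x-y\in A_J^\perp$. Accordingly, it suffices to verify that the coordinatewise formula \eqref{e:projAJ} produces a vector enjoying these two properties; uniqueness then forces that vector to be $P_{A_J}x$.

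Concretely, I would let $y$ be defined by \eqref{e:projAJ}, so that $y_i=x_i$ for $i\in J$ and $y_i=0$ otherwise. First I would check $y\in A_J$: by construction $I(y)\subseteq J$, whence Proposition~\ref{p:AJbasic}\ref{p:AJbasic-iii} gives $y\in A_J$. Next I would check $x-y\in A_J^\perp$: from \eqref{e:projAJ} the difference satisfies $(x-y)_i=0$ for every $i\in J$, so $I(x-y)\cap J=\emp$, and Proposition~\ref{p:AJbasic}\ref{p:AJbasic-iv} yields $x-y\in A_J^\perp$. By uniqueness of the projection, $y=P_{A_J}x$, which is exactly \eqref{e:projAJ}.

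For the distance formula \eqref{e:distAJ}, I would use that $d^2_{A_J}(x)=\|x-P_{A_J}x\|^2=\|x-y\|^2$, the infimum defining $d_{A_J}(x)$ being attained at the projection. Since $(x-y)_i=0$ for $i\in J$ and $(x-y)_i=x_i$ for $i\notin J$, expanding the squared norm coordinatewise gives $\sum_{j\in\{1,\ldots,\dm\}\smallsetminus J}|x_j|^2$. The remaining identity $\sum_{j\in\{1,\ldots,\dm\}\smallsetminus J}|x_j|^2=\sum_{j\in I(x)\smallsetminus J}|x_j|^2$ holds because every term with $x_j=0$ contributes nothing, so the index of summation may be restricted to $j\in I(x)$, i.e., to those $j$ with $x_j\neq 0$.

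There is no genuine obstacle here; the only point deserving care is the appeal to the projection characterization, which is legitimate precisely because $A_J$ is a closed (indeed finite-dimensional) subspace, guaranteeing both existence and uniqueness of $P_{A_J}x$. Everything else is coordinatewise bookkeeping, in keeping with the paper's remark that the result is elementary.
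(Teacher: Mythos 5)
Your proof is correct, and since the paper omits the proof of this proposition as ``elementary,'' your argument is exactly the intended one: characterize $P_{A_J}x$ as the unique $y$ with $y\in A_J$ and $x-y\in A_J^\perp$, verify both properties coordinatewise via Proposition~\ref{p:AJbasic}\ref{p:AJbasic-iii} and \ref{p:AJbasic-iv}, and then expand $\|x-y\|^2$ to get \eqref{e:distAJ}. No gaps, and no circularity either, since Proposition~\ref{p:AJbasic} is a pure index-set statement that does not rely on projections.
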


The following technical result will be useful later.

\begin{lemma}
\label{l:120412c}
Let $c\in A$, and assume that $s\leq\dm-1$.
Then
\begin{equation}
\min\menge{d_{A_J}(c)}{c\not\in A_J,J\in \mcJ_s}=\min\menge{|c_j|}{j\in I(c)}.
\end{equation}
\end{lemma}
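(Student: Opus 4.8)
The plan is to reduce the claim to a finite combinatorial minimization over the coordinate values $|c_j|^2$, $j\in I(c)$, and then to exhibit an index set that attains the bound. First I would rewrite the two constraints defining the left-hand minimum. By Proposition~\ref{p:AJbasic}\ref{p:AJbasic-iii}, for $J\in\mcJ_s$ one has $c\in A_J$ if and only if $I(c)\subseteq J$, so the admissibility condition $c\notin A_J$ is equivalent to $I(c)\smallsetminus J\neq\varnothing$. By Proposition~\ref{p:projAJ},
\[
d^2_{A_J}(c)=\sum_{j\in I(c)\smallsetminus J}|c_j|^2 .
\]
Since $t\mapsto\sqrt{t}$ is increasing on $\RP$, it suffices to show that the minimum of $\sum_{j\in I(c)\smallsetminus J}|c_j|^2$ over all $J\in\mcJ_s$ with $I(c)\smallsetminus J\neq\varnothing$ equals $\min_{j\in I(c)}|c_j|^2$.

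For the inequality ``$\geq$'', fix any admissible $J$. Then $I(c)\smallsetminus J$ is a nonempty subset of $I(c)$, and each summand $|c_j|^2$ with $j\in I(c)$ is strictly positive; hence
\[
\sum_{j\in I(c)\smallsetminus J}|c_j|^2\ \geq\ \min_{j\in I(c)\smallsetminus J}|c_j|^2\ \geq\ \min_{j\in I(c)}|c_j|^2 .
\]
Taking the minimum over $J$ gives the desired lower bound.

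For ``$\leq$'' I would construct an explicit minimizer. Choose $j_0\in I(c)$ with $|c_{j_0}|=\min_{j\in I(c)}|c_j|$, and aim for a set $J\in\mcJ_s$ with $I(c)\smallsetminus J=\{j_0\}$: for any such $J$ the sum collapses to $|c_{j_0}|^2$, and $c\notin A_J$ holds automatically because $j_0\in I(c)\smallsetminus J$. Concretely, set $J:=(I(c)\smallsetminus\{j_0\})\cup K$, where $K$ is a set of $s-\card(I(c))+1$ indices taken from $\{1,\dots,\dm\}\smallsetminus I(c)$; then $\card(J)=s$, $J\supseteq I(c)\smallsetminus\{j_0\}$, and $j_0\notin J$, as required.

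The only step using the hypothesis, and the sole real obstacle, is the existence of such a $K$, i.e.\ that there are enough coordinates outside $I(c)$ to pad $I(c)\smallsetminus\{j_0\}$ up to cardinality $s$ while still excluding $j_0$. Since $c\in A$ gives $\card(I(c))\leq s$, the required number $s-\card(I(c))+1$ is nonnegative, and it does not exceed the number $\dm-\card(I(c))$ of available indices precisely when $s\leq\dm-1$, which is the standing assumption. Combining the two inequalities yields the claimed equality. (The degenerate case $c=0$, where $I(c)=\varnothing$ and both sides reduce to $\min\varnothing=\pinf$, is consistent with the usual convention.)
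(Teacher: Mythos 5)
Your proof is correct and follows essentially the same route as the paper's: the same translation of $c\notin A_J$ via Proposition~\ref{p:AJbasic}\ref{p:AJbasic-iii}, the same lower bound from \eqref{e:distAJ}, and the same construction $J=(I(c)\smallsetminus\{j_0\})\cup K$ with a padding set $K$ whose existence is exactly where $s\leq\dm-1$ enters. The only differences are cosmetic: you work with squared distances throughout and add a remark on the degenerate case $c=0$, which the paper leaves implicit.
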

\begin{proof}
First, let $J\in\mcJ_s$ such that $c\not\in A_J$
$\Leftrightarrow$ $I(c)\not\subseteq J$
by Proposition~\ref{p:AJbasic}\ref{p:AJbasic-iii}.
So $I(c)\smallsetminus J\neq\emp$.
By \eqref{e:distAJ},
$d^2_{A_J}(c)=\sum_{j\in I(c)\smallsetminus J}|c_j|^2\geq
\min\menge{|c_j|^2}{j\in I(c)}$.
Hence
\begin{equation}
\min\menge{d_{A_J}(c)}{c\not\in A_J,J\in \mcJ_s}
\geq\min\menge{|c_j|}{j\in I(c)}.
\end{equation}
Since $1\leq 1 + s-\|c\|_0 \leq
\dm-\|c\|_0=\card(\{1,\ldots,\dm\}\smallsetminus I(c))$,
there exists a nonempty subset $K$ of $\{1,\ldots,\dm\}\smallsetminus
I(c)$ with $\card(K) = s-\|c\|_0+1$.
Let $j\in I(c)$ such that $|c_j| = \min_{i\in I(c)}|c_i|$ and set
\begin{equation}
\label{e:120411b}
J := (I(c)\smallsetminus\{j\})\cup K.
\end{equation}
Then $c\notin A_J$ and
$\card(J) = \card(I(c))-1+\card(K)=\|c\|_0 - 1+s-\|c\|_0+1=s$.
Hence $J\in\mcJ_s$.
Because $I(c)\smallsetminus J=\{j\}$, it follows again from
\eqref{e:distAJ} that
$d^2_{A_J}(c) = \sum_{i\in I(c)\smallsetminus J} |c_i|^2 = |c_j|^2$.
Therefore $d_{A_J}(c) = |c_j| = \min_{i\in I(c)}|c_i|$, which yields
the inequality complementary to \eqref{e:120411b}.
\end{proof}

Now let $x=(x_1,...,x_\dm)\in X$, and set
\begin{equation}
\mcC_s(x):=
\menge{J\in\mcJ_s}{\min_{j\in J}|x_j|\geq \max_{k\not\in J}|x_{k}|};
\end{equation}
in other words,
$J\in \mcC_s(y)$ if and only if $J$ contains $s$ indices to the $s$ largest
coordinates of $x$ in absolute value.

The proof of the next result is straightforward.

\begin{lemma}\label{l:0401a}
Let $x=(x_1,\ldots,x_\dm)\in X$ such that
$\|x\|_0=\card(I(x))\geq s$,
and let $J\in\mcC_s(x)$.
Then $J\subseteq I(x)$ and
$\min_{j\in J}|x_j|\geq \min_{j\in I(x)}|x_j| >0$.
If $\|x\|_0=\card(I(x))=s$, then $\mcC_s(x)=\{I(x)\}$.
\end{lemma}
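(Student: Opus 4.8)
The plan is to prove the three assertions in the order stated, noting that the inclusion $J\subseteq I(x)$ is the crux from which the rest follows by elementary cardinality bookkeeping. Throughout I would use only the defining inequality of $\mcC_s(x)$, namely that $J\in\mcJ_s$ with $\min_{j\in J}|x_j|\geq\max_{k\notin J}|x_k|$, together with the hypothesis $\card(I(x))=\|x\|_0\geq s=\card(J)$.

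First I would establish $J\subseteq I(x)$ by contradiction. Suppose some $j_0\in J$ lies outside $I(x)$; then $x_{j_0}=0$, so $\min_{j\in J}|x_j|\leq|x_{j_0}|=0$ and hence equals $0$. The defining inequality then forces $|x_k|=0$ for every $k\notin J$, i.e. $I(x)\subseteq J$. Combined with $j_0\in J\smallsetminus I(x)$ this gives $I(x)\subseteq J\smallsetminus\{j_0\}$, whence $\card(I(x))\leq\card(J)-1=s-1$, contradicting $\card(I(x))\geq s$. Once the inclusion is in hand, the second assertion is immediate: every $j\in J$ lies in $I(x)$, so $|x_j|>0$, and minimizing over the smaller index set $J\subseteq I(x)$ can only preserve or raise the minimum, giving $\min_{j\in J}|x_j|\geq\min_{j\in I(x)}|x_j|$; the latter is strictly positive because $I(x)$ is nonempty (as $\card(I(x))\geq s\geq1$) and every coordinate indexed by $I(x)$ is nonzero.

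For the final statement I would assume $\card(I(x))=s$. Any $J\in\mcC_s(x)$ then satisfies $J\subseteq I(x)$ by the first part and $\card(J)=s=\card(I(x))$, forcing $J=I(x)$, so $\mcC_s(x)\subseteq\{I(x)\}$. For the reverse inclusion I would check $I(x)\in\mcC_s(x)$ directly: $I(x)\in\mcJ_s$ since $\card(I(x))=s$, and since $x_k=0$ for every $k\notin I(x)$ we have $\max_{k\notin I(x)}|x_k|=0<\min_{j\in I(x)}|x_j|$ (the inequality holding vacuously in the edge case $I(x)=\{1,\dots,\dm\}$), so the defining inequality is satisfied. I do not expect a genuine obstacle here; the only step requiring care is the contradiction argument for the inclusion, where the cardinalities must be tracked precisely, and the degenerate case of an empty complement checked so that the ``$\max$ over the empty index set'' is handled by the vacuous reading of the $\mcC_s(x)$ condition.
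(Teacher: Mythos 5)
Your proof is correct. The paper itself omits the argument (it states only that ``the proof of the next result is straightforward''), so there is no authorial proof to compare against; your contradiction argument for $J\subseteq I(x)$, the cardinality bookkeeping forcing $\mcC_s(x)=\{I(x)\}$ when $\card(I(x))=s$, and the explicit handling of the empty-complement edge case $J=\{1,\ldots,\dm\}$ are all sound and supply exactly the elementary details the authors left to the reader.
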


\subsection*{Projections}
The decomposition of the sparsity set defined by \eqref{e:russell} yields a natural expression 
for the projection onto this set.
\begin{proposition}[Projection onto $A$ and its inverse]\label{p:projAU}
Let $x=(x_1,\ldots,x_\dm)\in X$, and define 
$A:=\menge{x\in X}{\|x\|_0\leq s}$.
Then the following hold:
\begin{enumerate}
\item\label{p:projAU-i}
The distance from $x$ to $A$ is solely determined by $\mcC_s(x)$;
more precisely,
\begin{equation}\label{e:distAU}
(\forall J\in\mcJ_s)\quad
d_{A_J}(x)  \begin{cases}
= d_A(x), &\text{if $J\in\mcC_s(x)$;}\\
> d_A(x), &\text{if $J\notin \mcC_s(x)$.}
\end{cases}
\end{equation}
\item\label{p:projAU-ii}
The projection of $x$ on $A$ is solely determined by $\mcC_s(x)$;
more precisely,
\begin{equation}\label{e:projAU}
P_A(x)=\bigcup_{J\in \mcC_s(x)}P_{A_J}(x)
=\bigcup_{J\in \mcC_s(x)}\MMenge{y=(y_1,\ldots,y_\dm)\in X}
{\begin{aligned}
(\forall j\in \{1,\ldots,\dm\})\; y_j=\begin{cases}
x_j,&\text{if $j\in J$;}\\
0, &\text{if $j\notin J$.}
\end{cases}
\end{aligned}}
\end{equation}
\item\label{p:projAU-ii+}
$(\forall y\in P_{A}(x))$  $\|y\|_0=\min\{\|x\|_0,s\}$.
\item
\label{p:projAU-ii++} 
If $x\not\in A$, then $(\forall y\in P_{A}(x))$
$I(y)\in\mcC_s(x)$ and $\|y\|_0=s$.
\item\label{p:projAU-iii}
If $a\in A$ and $\|a\|_0=s$, then
\begin{equation}
\label{e:120410d}
P^{-1}_A(a)=\mmmenge{y=(y_1,\ldots,y_\dm)\in X}
{\begin{aligned}
&(\forall j\in I(a))\; y_j=a_j\\
&\max_{k\notin I(a)} |y_{k}|\leq \min_{j\in I(a)} |a_j|.
\end{aligned}
}
\end{equation}
\item\label{p:projAU-iv}
If $a\in A$ and $\|a\|_0<s$, then $P^{-1}_A(a)=a$.
\end{enumerate}
\end{proposition}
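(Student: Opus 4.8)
The plan is to treat the six parts in sequence, deriving everything from the single structural fact $A=\bigcup_{J\in\mcJ_s}A_J$, so that $d_A(x)=\min_{J\in\mcJ_s}d_{A_J}(x)$ (the distance to a finite union being the minimum of the distances to its pieces), together with Proposition~\ref{p:projAJ}, which gives $d^2_{A_J}(x)=\sum_{j\notin J}|x_j|^2=\|x\|^2-\sum_{j\in J}|x_j|^2$. Thus minimizing $d_{A_J}(x)$ over $\mcJ_s$ is the same as maximizing $\sum_{j\in J}|x_j|^2$, and the whole proposition hinges on identifying the maximizing index sets.

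For \ref{p:projAU-i} I would show that the maximizers of $\sum_{j\in J}|x_j|^2$ over $\mcJ_s$ are exactly the members of $\mcC_s(x)$. If $J\in\mcC_s(x)$, then for any $J'\in\mcJ_s$ every coordinate in $J\smallsetminus J'$ dominates every coordinate in $J'\smallsetminus J$ in absolute value (by definition of $\mcC_s(x)$, since $J'\smallsetminus J$ lies outside $J$), and the two index sets have equal cardinality, so $\sum_{j\in J}|x_j|^2\geq\sum_{j\in J'}|x_j|^2$; hence $d_{A_J}(x)=d_A(x)$. Conversely, if $J\notin\mcC_s(x)$ there exist $j_0\in J$ and $k_0\notin J$ with $|x_{j_0}|<|x_{k_0}|$, and passing to $(J\smallsetminus\{j_0\})\cup\{k_0\}$ strictly increases the sum, so $J$ is not a maximizer and $d_{A_J}(x)>d_A(x)$. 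Part \ref{p:projAU-ii} then follows by a squeeze: any $y\in P_A(x)$ lies in some $A_{J'}$ (extend $I(y)$ to a set of size $s$), and $d_A(x)=\|x-y\|\geq d_{A_{J'}}(x)\geq d_A(x)$ forces $y=P_{A_{J'}}(x)$ with $J'\in\mcC_s(x)$; conversely each $P_{A_J}(x)$ with $J\in\mcC_s(x)$ is a nearest point. The explicit coordinate formula is just Proposition~\ref{p:projAJ} applied to these $J$.

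For \ref{p:projAU-ii+} and \ref{p:projAU-ii++} I would read off $I(y)=J\cap I(x)$ from the coordinate formula and split into two cases. If $\|x\|_0\geq s$, then Lemma~\ref{l:0401a} gives $J\subseteq I(x)$, whence $I(y)=J$ and $\|y\|_0=s$. If $\|x\|_0<s$, then any $J\in\mcC_s(x)$ must contain all of $I(x)$ (a nonzero coordinate cannot be excluded in favour of a zero one), so $I(y)=I(x)$ and $\|y\|_0=\|x\|_0$; in both cases $\|y\|_0=\min\{\|x\|_0,s\}$. Since $x\notin A$ means precisely $\|x\|_0>s$, the first case applies and yields $I(y)=J\in\mcC_s(x)$ with $\|y\|_0=s$, which is \ref{p:projAU-ii++}.

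The fiber computations \ref{p:projAU-iii} and \ref{p:projAU-iv} close the argument. For \ref{p:projAU-iii}, $a\in P_A(x)$ means $a=P_{A_J}(x)$ for some $J\in\mcC_s(x)$ by \ref{p:projAU-ii}; since $I(a)\subseteq J$ and $\card(I(a))=\|a\|_0=s=\card(J)$ we get $J=I(a)$, so the membership reduces to $x_j=a_j$ for $j\in I(a)$ together with $I(a)\in\mcC_s(x)$, i.e.\ $\max_{k\notin I(a)}|x_k|\leq\min_{j\in I(a)}|a_j|$, giving the stated set. For \ref{p:projAU-iv}, if $a\in P_A(x)$ with $\|a\|_0<s$, then \ref{p:projAU-ii+} forces $\min\{\|x\|_0,s\}=\|a\|_0<s$, hence $\|x\|_0<s$, so $x\in A$ and $P_A(x)=\{x\}$, whence $a=x$. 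I expect the only delicate point to be the strict inequality in \ref{p:projAU-i}: one must handle ties among the $|x_j|$ carefully so that $\mcC_s(x)$ captures all maximizers and none others. Everything downstream is bookkeeping with the projection formula and the support counts supplied by Lemma~\ref{l:0401a}.
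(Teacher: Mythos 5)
Your proposal is correct and takes essentially the same route as the paper: both reduce everything to minimizing $d_{A_J}^2(x)=\|x\|^2-\sum_{j\in J}|x_j|^2$ over $\mcJ_s$, identify $\mcC_s(x)$ as exactly the set of minimizers via an exchange/domination argument (the paper's identity \eqref{e:0410a}), and then derive parts \ref{p:projAU-ii}--\ref{p:projAU-iv} from the coordinate formula \eqref{e:projAJ} together with Lemma~\ref{l:0401a}. The only differences are organizational --- you make explicit the squeeze argument for \ref{p:projAU-ii} and the ``$I(x)\subseteq J$ when $\|x\|_0<s$'' observation, which the paper handles by the shortcut $P_A(x)=x$ --- and these fill in steps the paper leaves implicit rather than change the argument.
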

\begin{proof}
The following observation will be useful.
If $J\in\mcJ_s$,
$j\in J$, and $k\notin J$,
then $K := (J\smallsetminus\{j\})\cup \{k\} \in \mcJ_s$ and
\eqref{e:distAJ} implies
\begin{subequations}
\label{e:0410a}
\begin{align}
d_{A_{K}}^2(x)
&= \sum_{l\notin K} |x_l|^2
= \|x\|^2 - \sum_{l\in K}|x_l|^2
= \|x\|^2 - \sum_{l\in J\cap K}|x_l|^2 - |x_{k}|^2\\
&= \|x\|^2 - \sum_{l\in J\cap K}|x_j|^2 - |x_{j}|^2 +
\big(|x_{j}|^2-|x_{k}|^2\big)\\
&= \|x\|^2 - \sum_{l\in J}|x_l|^2 +
\big(|x_{j}|^2-|x_{k}|^2\big)
= \sum_{l\notin J} |x_l|^2 + \big(|x_{j}|^2-|x_{k}|^2\big)\\
&= d_{A_{J}}^2(x) + |x_{j}|^2-|x_{k}|^2.
\end{align}
\end{subequations}

\ref{p:projAU-i}:
It is clear that
\begin{equation}
d_A(x) = \min\menge{d_{A_J}(x)}{J\in \mcJ_s}.
\end{equation}
Let $K\in \mcJ_s$ and assume that $K\notin \mcC_s(x)$.
Then there exists $j$ and $k$ in $\{1,\ldots,\dm\}$ such that
$k\in K$, $j\notin K$, and $|x_k|<|x_j|$.
Now define $J = (K\smallsetminus\{k\})\cup\{j\}$.
Then $J\in \mcJ_s$ and
\begin{equation}
d_{A_{K}}^2(x) = d_{A_{J}}^2(x) + |x_{j}|^2-|x_{k}|^2
>  d_{A_{J}}^2(x)
\end{equation}
by \eqref{e:0410a}.
It follows that index sets in $\mcJ_s\smallsetminus \mcC_s(x)$ do not
contribute to the computation of $d_A(x)$.

Now assume that $J$ and $K$ both belong to $\mcC_s(x)$ and
that $J\neq K$. Then $\card(J\smallsetminus K)=\card(K\smallsetminus J)$.
Take $j\in J\smallsetminus K$ and $k\in K\smallsetminus J$.
Since $j\in J\in\mcC_s(x)$ and $k\notin J$, we have
$|x_j|\geq |x_k|$.
On the other hand, since $k\in K\in \mcC_s(x)$ and $j\notin K$,
we also have $|x_k|\geq |x_j|$. Altogether, $|x_j|=|x_k|$.
Thus
\begin{subequations}
\begin{align}
d_{A_J}^2(x)&= \|x\|^2 - \sum_{l\in J}|x_l|^2
= \|x\|^2 - \sum_{l\in J\cap K}|x_l|^2 - \sum_{l\in J\smallsetminus
K}|x_l|^2 \\
&= \|x\|^2 - \sum_{l\in K\cap J}|x_l|^2 - \sum_{l\in K\smallsetminus J}|x_l|^2
= \|x\|^2 - \sum_{l\in K}|x_l|^2
= d_{A_K}^2(x).
\end{align}
\end{subequations}
This completes the proof of \eqref{e:distAU}.

\ref{p:projAU-ii}: This follows from \eqref{e:distAU} and \eqref{e:projAJ}.

\ref{p:projAU-ii+}:
\emph{Case~1:} $\|x\|_0=\card(I(x))\leq s$.
Then, by definition, $x\in A$. Thus $P_A(x)=x$
and hence $\|P_A(x)\|_0=\|x\|_0=\min\{\|x\|_0,s\}$.

\emph{Case~2:}
$\|x\|_0=\card(I(x))>s$.
Let $J\in\mcC_s(x)$.
Lemma~\ref{l:0401a} implies
$\min_{j\in J} |x_j|>0$.
It follows from \eqref{e:projAU} that
there exists $y=(y_1,\ldots,y_\dm)\in P_A(x)$
such that 
\begin{equation}
  (\forall j\in J)\; |y_j|=|x_j|>0\quad\text{and}\quad
  (\forall j\not\in J)\; y_j=0.
\end{equation}
So 
\begin{equation}
\label{e:120410b}
I(y)=J,
\end{equation}
and hence $\|y\|_0=\card(J)=s=\min\{\card(I(x)),s\}$.

\ref{p:projAU-ii++}:
Let $y\in P_A(x)$. 
Since $x\notin A$, we have $\|x\|_0>s$ and hence
\ref{p:projAU-ii+} implies that $\|y\|_0=s$.
By \eqref{e:projAU}, there exists $J\in\mcC_s(x)$ such that
$I(y)\subseteq J$. But $\card I(y)=s=\card J$, and hence
$I(y)=J$. 

\ref{p:projAU-iii}:
Denote the right-hand side of \eqref{e:120410d} by $R$.
``$\supseteq$'': for every $y\in R$,
we have $I(a)\in\mcC_s(y)$. By \eqref{e:projAU}, $a\in P_A y$.
Hence $y\in P_A^{-1}(a)$. This establishes $R\subseteq P_A^{-1}(a)$.
``$\subseteq$'':
Suppose that $y\in P^{-1}_A(a)$, i.e., $a\in P_A(y)$.
Again by \eqref{e:projAU},
there exists $J\in\mcC_s(y)$ such that
\begin{equation}
\label{e:120410c}
(\forall j\in J)\ a_j=y_j\quad\mbox{and}\quad(\forall j\not\in J)\ a_j=0.
\end{equation}
Since $\|a\|_0=s$, Lemma~\ref{l:0401a} implies that
$J=I(a)$.
Hence, by \eqref{e:120410c},
$(\forall j\in I(a))$ $y_j=a_j$.
On the other hand, by definition of $\mcC_s(y)$, we have
$\min_{j\in J}|y_j| \geq \max_{k\notin J}|y_k|$.
Altogether, $y\in R$.

\ref{p:projAU-iv}: Let $y\in P^{-1}_A a$, i.e., $a\in P_Ay$.
The hypothesis and \ref{p:projAU-ii+} imply
$s>\|a\|_0=\min\{\|y\|_0,s\}$,
Hence $\|y\|_0<s$; therefore, $y\in A$ and so $a=P_Ay=y$.
\end{proof}

\begin{proposition}[projection onto $B$]
{\rm (See \cite[Lemma~4.1]{BK}.)}
\label{p:projAx=b}
Recall that $B = \menge{x\in X}{Mx=p}$.
Then the projection onto $B$ is given by
\begin{equation}
\label{e:projAx=b}
P_B\colon X\to X\colon x\mapsto x - M^\dagger(Mx-p),
\end{equation}
where $M^\dagger$ denotes the Moore-Penrose inverse of $M$.
\end{proposition}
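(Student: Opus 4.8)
The plan is to verify the standard formula for the projection onto the affine subspace $B=\menge{x\in X}{Mx=p}$ directly, using the variational characterization of the projection together with the defining properties of the Moore--Penrose inverse $M^\dagger$. Since $B$ is a nonempty closed convex set (an affine subspace), $P_B$ is single-valued, and $y=P_Bx$ is characterized by the two conditions $y\in B$ (i.e.\ $My=p$) and $x-y\in(\pa B)^\perp$, where $\pa B=\ker M$ is the linear subspace parallel to $B$. So the entire proof reduces to checking that the candidate point $y:=x-M^\dagger(Mx-p)$ satisfies these two conditions.

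First I would record the four Penrose equations defining $M^\dagger$, namely $MM^\dagger M=M$, $M^\dagger MM^\dagger=M^\dagger$, and the symmetry of $MM^\dagger$ and $M^\dagger M$; I would also use the key fact that $MM^\dagger$ is the orthogonal projector onto $\ran M$. The feasibility condition $My=p$ would be checked by computing
\begin{equation}
My = Mx - MM^\dagger(Mx-p) = Mx - MM^\dagger Mx + MM^\dagger p.
\end{equation}
Using $MM^\dagger M=M$ gives $MM^\dagger Mx=Mx$, so the first and second terms cancel; and since $B\neq\varnothing$ there exists $x_0$ with $Mx_0=p$, whence $p=Mx_0\in\ran M$ and therefore $MM^\dagger p=p$ because $MM^\dagger$ fixes $\ran M$. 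This yields $My=p$, i.e.\ $y\in B$.

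Next I would verify the orthogonality condition. We have $x-y=M^\dagger(Mx-p)$, and I must show this lies in $(\ker M)^\perp=\ran M^\top$. This is immediate from the Penrose relation $M^\dagger=M^\dagger MM^\dagger$ combined with the symmetry of $M^\dagger M$: indeed $\ran M^\dagger=\ran M^\dagger M=\ran(M^\dagger M)^\top=\ran M^\top=(\ker M)^\perp$, so $x-y\in\ran M^\dagger\subseteq(\ker M)^\perp=(\pa B)^\perp$. Combining the two verified conditions with the characterization of the metric projection onto an affine subspace identifies $y$ as $P_Bx$, and formula \eqref{e:projAx=b} follows.

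The computations here are entirely routine, so there is no genuine obstacle; the only point requiring a little care is the claim $MM^\dagger p=p$, which relies on $p\in\ran M$ and hence on the standing assumption that $B$ is nonempty (equivalently, that the affine system $Mx=p$ is consistent). I would flag this explicitly rather than treat it as automatic, since without consistency the set $B$ is empty and the projection is undefined. Because the statement is cited from \cite[Lemma~4.1]{BK}, it would in any case suffice to refer to that source, but the self-contained argument above is short enough to include directly.
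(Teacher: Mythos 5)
Your proof is correct, but there is nothing in the paper to compare it against: the paper does not prove this proposition at all, it simply quotes it from \cite[Lemma~4.1]{BK}. Your self-contained verification is the standard argument and fills that gap cleanly --- characterize $P_Bx$ for the nonempty closed affine set $B$ by the two conditions $y\in B$ and $x-y\perp\pa B=\ker M$, then check both for $y=x-M^\dagger(Mx-p)$ using the Penrose identities. Your explicit flagging of the consistency requirement is a genuine point of care: $MM^\dagger p=p$ holds only because $p\in\ran M$, i.e.\ because $B\neq\varnothing$; in the paper's setting this is automatic, since the feasibility problem \eqref{e:sparse feas} presupposes $A\cap B\neq\varnothing$ and Theorem~\ref{t:mainsparse} assumes $c\in A\cap B$. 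One cosmetic remark: in the orthogonality step, the chain $\ran M^\dagger=\ran M^\dagger M=\ran(M^\dagger M)^\top=\ran M^\top$ asserts equalities, while the justification you give (the relation $M^\dagger=M^\dagger MM^\dagger$ plus symmetry of $M^\dagger M$) directly yields only the inclusion $\ran M^\dagger\subseteq\ran M^\top$; since the inclusion is all your argument uses, this does not affect correctness, but stating it as an inclusion would match the proof you actually wrote.
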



\subsection*{Normal and tangent cones}

\begin{proposition}[proximal normal cone to $A$]\label{p:prox-spr}
\begin{equation}
(\forall a\in A)\quad
\pnX{A}(a)=\begin{cases} (\supp(a))^\bot, &\text{if $\|a\|_0=s$;}\\
\{0\}, &\text{if $\|a\|_0<s$.}
\end{cases}
\end{equation}
\end{proposition}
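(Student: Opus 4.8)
The plan is to read off both cases directly from the inverse-projection formulas in Proposition~\ref{p:projAU}, using only the defining identity $\pnX{A}(a)=\cone\big(P_A^{-1}a-a\big)$ from \eqref{e:pnX}. The whole statement is essentially a translation-and-cone manipulation applied to the explicit descriptions of $P_A^{-1}(a)$ already established.

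First I would dispose of the case $\|a\|_0<s$. Here Proposition~\ref{p:projAU}\ref{p:projAU-iv} gives $P_A^{-1}(a)=\{a\}$, so $P_A^{-1}(a)-a=\{0\}$ and therefore $\pnX{A}(a)=\cone\{0\}=\{0\}$, as claimed. For the main case $\|a\|_0=s$, I would start from the explicit form of $P_A^{-1}(a)$ in Proposition~\ref{p:projAU}\ref{p:projAU-iii}. Setting $m:=\min_{j\in I(a)}|a_j|$, which is strictly positive because $a$ has exactly $s\geq 1$ nonzero coordinates, translating that set by $-a$ yields
\[
P_A^{-1}(a)-a=\menge{v\in X}{(\forall j\in I(a))\ v_j=0,\ \max_{k\notin I(a)}|v_k|\leq m}.
\]
Since $(\supp(a))^\perp=\lspan\menge{e_k}{k\notin I(a)}$, this set is precisely the $\max$-norm ball of radius $m>0$ inside the subspace $(\supp(a))^\perp$.

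It then remains to take the cone. Because $(\supp(a))^\perp$ is itself a cone and the translated set is contained in it, we get $\cone\big(P_A^{-1}(a)-a\big)\subseteq(\supp(a))^\perp$. Conversely, any $w\in(\supp(a))^\perp$ satisfies $w_j=0$ for $j\in I(a)$, and choosing $\rho\geq \max_k|w_k|/m$ forces $w/\rho$ to lie in the ball above; hence $w=\rho\,(w/\rho)\in\cone\big(P_A^{-1}(a)-a\big)$, giving the reverse inclusion and thus $\pnX{A}(a)=(\supp(a))^\perp$. The only step requiring care is this last one: that the cone generated by a $\max$-norm ball lying in $(\supp(a))^\perp$ recovers the \emph{entire} subspace. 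This hinges on $m>0$, i.e.\ on the hypothesis $\|a\|_0=s$ forcing every coordinate indexed by $I(a)$ to be nonzero, so that the ball is a genuine relative neighborhood of the origin in $(\supp(a))^\perp$ rather than a lower-dimensional slice; were $m=0$ the translated set would collapse and the cone would degenerate.
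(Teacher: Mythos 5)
Your proof is correct and follows exactly the paper's route: the paper's one-line proof ("combine the definition of $\pnX{A}(a)$ with Proposition~\ref{p:projAU}\ref{p:projAU-iii}\&\ref{p:projAU-iv}") is precisely your argument, with your write-up supplying the details (translation of $P_A^{-1}(a)$ by $-a$, and the observation that the cone over a max-norm ball of positive radius $m=\min_{j\in I(a)}|a_j|$ inside $(\supp(a))^\perp$ is the whole subspace) that the paper leaves implicit.
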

\begin{proof}
Combine the definition of $\pnX{A}(a)$ with
Proposition~\ref{p:projAU}\ref{p:projAU-iii}\&\ref{p:projAU-iv}.
\end{proof}

The following is a special case of a more general normal cone formulation for 
the set of matrices with rank bounded above by $s$ given in \cite{Luke12b}.
\begin{theorem}[Mordukhovich normal cone to $A$]\label{t:Mnor-spr}
\begin{equation}\label{e:Mnor-spr}
(\forall a\in A)\quad N_A(a) = \menge{u\in \RR^\dm}{\|u\|_0\leq \dm-s}\cap
 \big(\supp(a)\big)^\perp=\bigcup_{I(a)\subseteq J\in\mcJ_s} A_J^\perp.
\end{equation}
Consequently, if $\|a\|_0=s$, then \ $N_A(a)=(\supp(a))^\perp=A_{I(a)}^\perp$.
\end{theorem}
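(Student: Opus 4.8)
The plan is to compute the Mordukhovich normal cone $N_A(a)$ directly from its definition as the limiting proximal normal cone, using the explicit description of $\pnX{A}$ from Proposition~\ref{p:prox-spr}. Recall that $u\in N_A(a)$ if and only if there are sequences $a_k\to a$ in $A$ and $u_k\to u$ with $u_k\in\pnX{A}(a_k)$. Since $\|\cdot\|_0$ is lower semicontinuous (Lemma~\ref{l:0314a}\ref{l:0314a6}) and integer-valued, the map $a_k\mapsto\|a_k\|_0$ satisfies $\liminf_k\|a_k\|_0\geq\|a\|_0$; moreover by Lemma~\ref{l:0314a}\ref{l:0314a4} any sufficiently close $a_k$ satisfies $I(a)\subseteq I(a_k)$, so $\supp(a)\subseteq\supp(a_k)$. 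The terms with $\|a_k\|_0<s$ contribute only $\{0\}$, so the interesting contributions come from $a_k$ with $\|a_k\|_0=s$, where $\pnX{A}(a_k)=(\supp(a_k))^\perp=A_{I(a_k)}^\perp$.

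First I would prove the inclusion ``$\supseteq$'' for the right-hand union $\bigcup_{I(a)\subseteq J\in\mcJ_s}A_J^\perp$. Fix $J\in\mcJ_s$ with $I(a)\subseteq J$ and take any $u\in A_J^\perp$. The idea is to perturb $a$ into a point $a_k$ with support exactly $J$ by adding small nonzero entries at the indices $J\smallsetminus I(a)$; then $\|a_k\|_0=s$ and $a_k\to a$, while $u\in A_J^\perp=\pnX{A}(a_k)$ is a constant sequence, giving $u\in N_A(a)$. Next, for the reverse inclusion ``$\subseteq$'', I take $u\in N_A(a)$ with witnessing sequences. Discarding the (finitely many possible) index sets and passing to a subsequence, I may assume $I(a_k)$ is a constant set $J$ with $\card(J)=s$; then $u_k\in A_J^\perp$ for all $k$, and since $A_J^\perp$ is closed, the limit $u$ lies in $A_J^\perp$. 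Because $I(a)\subseteq I(a_k)=J$ for large $k$, this $J$ satisfies $I(a)\subseteq J\in\mcJ_s$, placing $u$ in the union.

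For the first equality I would then identify $\bigcup_{I(a)\subseteq J\in\mcJ_s}A_J^\perp$ with $\menge{u}{\|u\|_0\leq\dm-s}\cap(\supp(a))^\perp$. The containment of each $A_J^\perp$ in $(\supp(a))^\perp$ is immediate from $I(a)\subseteq J$ (so $A_{I(a)}\subseteq A_J$, hence $A_J^\perp\subseteq A_{I(a)}^\perp=(\supp(a))^\perp$ by Proposition~\ref{p:AJbasic}), and $A_J^\perp=A_{\{1,\ldots,\dm\}\smallsetminus J}$ is spanned by $\dm-s$ coordinate vectors, so $\|u\|_0\leq\dm-s$. Conversely, given $u\in(\supp(a))^\perp$ with $\|u\|_0\leq\dm-s$, I have $I(u)\cap I(a)=\varnothing$ and $\card(I(u))\leq\dm-s$; the complement $\{1,\ldots,\dm\}\smallsetminus I(u)$ has at least $s$ elements and contains $I(a)$, so I can choose $J\in\mcJ_s$ with $I(a)\subseteq J\subseteq\{1,\ldots,\dm\}\smallsetminus I(u)$, which gives $u\in A_J^\perp$. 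The final ``consequently'' clause is the degenerate case $\|a\|_0=s$: here $I(a)$ itself is the only $s$-element set containing $I(a)$, so the union collapses to the single term $A_{I(a)}^\perp=(\supp(a))^\perp$.

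The main obstacle is the reverse inclusion $N_A(a)\subseteq\bigcup A_J^\perp$, specifically justifying that one may pass to a subsequence along which the support sets $I(a_k)$ stabilize. This rests on the finiteness of $\mcJ_s$ (there are only finitely many possible $s$-element index sets) together with lower semicontinuity forcing $\|a_k\|_0\geq\|a\|_0$ and local support-inclusion forcing $I(a)\subseteq I(a_k)$ eventually; the terms where $\|a_k\|_0<s$ contribute only the zero vector and are harmless. Once the stabilization is in place, closedness of $A_J^\perp$ does the rest, so the remaining work is purely a careful bookkeeping of indices rather than any analytic difficulty.
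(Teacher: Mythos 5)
Your proof is correct, and it delivers all three assertions of the theorem; but its logical organization genuinely differs from the paper's. The paper never proves the inclusion $N_A(a)\subseteq\bigcup_{I(a)\subseteq J\in\mcJ_s}A_J^\perp$ directly. Instead it closes a cycle of three inclusions,
\begin{equation*}
N_A(a)\;\subseteq\;\menge{u\in\RR^\dm}{\|u\|_0\leq\dm-s}\cap\big(\supp(a)\big)^\perp
\;\subseteq\;\bigcup_{I(a)\subseteq J\in\mcJ_s}A_J^\perp\;\subseteq\;N_A(a),
\end{equation*}
so that both equalities in \eqref{e:Mnor-spr} drop out simultaneously: limiting proximal normals are trapped in the middle set using only the local estimate $\pnX{A}(x)\subseteq(\supp(a))^\perp$ for nearby $x\in A$, the bound $\|u\|_0\leq\dm-s$, and lower semicontinuity of $\|\cdot\|_0$, and the combinatorial selection of $J$ with $I(a)\subseteq J\subseteq\{1,\ldots,\dm\}\smallsetminus I(u)$ is then applied to arbitrary elements of that middle set rather than to limits of proximal normals. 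You instead prove $N_A(a)=\bigcup_{I(a)\subseteq J\in\mcJ_s}A_J^\perp$ head-on, and your key extra step is the pigeonhole/stabilization argument: since $\mcJ_s$ is finite, you pass to a subsequence along which $I(a_k)$ is a constant $J\in\mcJ_s$, and closedness of the single stratum $A_J^\perp$ captures the limit $u$; the identification of the union with $\menge{u\in\RR^\dm}{\|u\|_0\leq\dm-s}\cap(\supp(a))^\perp$ then becomes a standalone combinatorial set identity, one of whose inclusions is exactly the paper's choice-of-$J$ argument. The remaining ingredients coincide: both proofs use the perturbation net with support exactly $J$ to obtain $\bigcup A_J^\perp\subseteq N_A(a)$, and Lemma~\ref{l:0314a}\ref{l:0314a4} to force $I(a)\subseteq I(a_k)$ for nearby points. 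What the paper's cycle buys is brevity: no subsequence extraction and no case distinction over indices $k$ with $\|a_k\|_0<s$ (in your route that case must be, and tersely is, handled: infinitely many such $k$ force $u=0$, which lies in the union because $I(a)$ extends to some $J\in\mcJ_s$ as $\|a\|_0\leq s$). What your route buys is a sharper structural insight---every Mordukhovich normal at $a$ arises as a limit of proximal normals along a single stabilized stratum $A_J^\perp$---together with a technique (pigeonhole over a finite family of closed strata) that generalizes verbatim to normal cones of arbitrary finite unions of closed sets, very much in the spirit of the paper's decomposition philosophy.
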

\begin{proof}
Let $a\in A$, and let $\ve\in\left]0,\min\menge{a_j}{j\in I(a)}\right[$.
Let $x=(x_1,\ldots,x_\dm)\in A \cap
\big(a+\left[-\ve,+\ve\right]^\dm\big)$.
Then $\|x\|_0\leq s$ and, by Lemma~\ref{l:0314a}\ref{l:0314a4},
$\supp(a)\subseteq\supp(x)$.
Hence, using Proposition~\ref{p:prox-spr}, we deduce that
\begin{equation}\label{e:0401b}
\pnX{A}(x)=\begin{cases} (\supp(x))^\bot ,& \text{if $\|x\|_0=s$;}\\
\{0\} ,& \text{if $\|x\|_0<s$}
\end{cases}\; \subseteq\; \big(\supp(a)\big)^\bot.
\end{equation}
Note that  if $\|x\|_0=s$, then \eqref{e:0401b}
yields $\dim (\supp(x))^\perp =\dm-s$;
in either case,
\begin{equation}
\label{e:120410e}
\big(\forall u\in \pnX{A}(x)\big)\quad
\|u\|_0\leq \dm-s.
\end{equation}

Let $u\in X$. We assume first that $u\in N_A(a)$.
Then there exist sequences $(x_k)_{k\in\NN}$ in
$A\cap \big(a+\left[-\ve,+\ve\right]^\dm\big)$ and
$(u_k)_{k\in\NN}$ in $X$ such that
$x_k\to a$,
$u_k\to u$, and
$(\forall k\in\NN)$ $u_k\in\pnX{A}(x_k)$.
It follows from \eqref{e:0401b}, \eqref{e:120410e},
and Lemma~\ref{l:0314a}\ref{l:0314a6} that $u\in(\supp(a))^\perp$
and $\|u\|_0\leq\dm-s$.
Thus
\begin{equation}
N_A(a) \subseteq \menge{u\in \RR^\dm}{\|u\|_0\leq \dm-s}\cap
 \big(\supp(a)\big)^\perp.
\end{equation}
We now assume that
$u\in(\supp(a))^\bot$ and $\|u\|_0\leq\dm-s$.
Since $u\in(\supp(a))^\bot$, we have $I(a)\cap I(u)=\emp$ and hence
$I(a)\subset \{1,2,\ldots,\dm\}\smallsetminus I(u)$.
Since $a\in A$ and $\card I(u)=\|u\|_0\leq \dm-s$, we have
$\card{I(a)}\leq s \leq \card(\{1,2,\ldots,\dm\}\smallsetminus I(u))$.
Let $J\in\mcJ_s$ such that
$I(a)\subseteq J\subseteq \{1,2,\ldots,\dm\}\smallsetminus I(u)$.
By Proposition~\ref{p:AJbasic}\ref{p:AJbasic-iv},
$u\in A_J^\perp$.
We have established that
\begin{equation}
\menge{u\in \RR^\dm}{\|u\|_0\leq \dm-s}\cap
 \big(\supp(a)\big)^\perp\subseteq
\bigcup_{I(a)\subseteq J\in\mcJ_s} A_J^\perp.
\end{equation}
Finally, assume that
$u\in A_J^\perp$, where $\card J=s$ and $I(a)\subseteq J$.
Set
\begin{equation}
(\forall\ve\in\RPP)(\forall j\in\{1,2,\ldots,\dm\})\quad
x_{\ve,j}:=\begin{cases}
a_j,& \text{if $j\in I(a)$;}\\
\ve,& \text{if $j\in J\smallsetminus I(a)$;}\\
0& \text{otherwise}.
\end{cases}
\end{equation}
This defines a bounded net $(x_\ve)_{\ve\in\zeroun}$ in $X$
with $x_\ve\to a$ as $\ve\to 0$.
Note that $(\forall \ve\in\zeroun)$
$I(x_\ve)=J$; hence, $x_\ve\in\supp(x_\ve)=A_J\subseteq A$
and, by Proposition~\ref{p:prox-spr},
$u\in A_J^\perp = (\supp(x_\ve))^\perp =\pnX{A}(x_\ve)$.
Thus $u\in N_A(a)$. We have established the inclusion
\begin{equation}
\bigcup_{I(a)\subseteq J\in\mcJ_s} A_J^\perp\subseteq N_A(a).
\end{equation}
This completes the proof of \eqref{e:Mnor-spr}.

Finally, if $\|a\|_0=s$, then $\card I(a)=s$
and the only choice for $J$ in \eqref{e:Mnor-spr} is $I(a)$.
\end{proof}

We now turn to the classical tangent cone of $A$.


\begin{definition}[tangent cone]
Let $C$ be a nonempty subset of $X$, and let $c\in C$.
Then a vector $v\in X$ belongs to the \emph{tangent cone} to $C$ at $c$,
denoted $T_C(c)$, if there exist sequences $(x_k)_{k\in\NN}$ in $C$ and
$(t_k)_{k\in\NN}$ in $\RPP$ such that 
$x_k\to c$, $t_k\to 0$, and $(x_k-c)/t_k\to v$.
\end{definition}

The proof of the following result is elementary and hence omitted.

\begin{lemma}
\label{l:0406a}
Let $C$ be a nonempty subset of $X$,
let $c\in C$, and assume that $(Y_k)_{k\in K}$ a finite collection
of affine subspaces such that $y\in \bigcap_{k\in K} Y_k\subseteq Y :=
\bigcup_{k\in K} Y_k$.
Then the following hold:
\begin{enumerate}
\item
\label{l:0406a-i}
$(\forall\rho\in\RPP)$ $T_C(c) = T_{(C\cap\ball{c}{\rho})}(c)$.
\item
\label{l:0406a-ii}
$T_Y(y)=\bigcup_{k\in K}\pa(Y_k)$.
\item
\label{l:0406a-iii}
If each $Y_k$ is a linear subspace, then $T_Y(y)= Y$.
\end{enumerate}
\end{lemma}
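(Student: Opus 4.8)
The plan is to prove the three parts in the order they are stated, since each is essentially a direct unwinding of the tangent-cone definition, with part~\ref{l:0406a-iii} following immediately from part~\ref{l:0406a-ii}.

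For part~\ref{l:0406a-i}, I would argue both inclusions. The inclusion $T_{(C\cap\ball{c}{\rho})}(c)\subseteq T_C(c)$ is trivial because $C\cap\ball{c}{\rho}\subseteq C$, so any sequence realizing $v$ on the left realizes it on the right. For the reverse inclusion, start with $v\in T_C(c)$ witnessed by sequences $(x_k)$ in $C$ and $(t_k)$ in $\RPP$ with $x_k\to c$, $t_k\to 0$, and $(x_k-c)/t_k\to v$. Since $x_k\to c$, for all large $k$ we have $\|x_k-c\|\leq\rho$, i.e.\ $x_k\in C\cap\ball{c}{\rho}$; discarding the finitely many initial terms gives a tail sequence that still converges to $c$ and still has $(x_k-c)/t_k\to v$, so $v\in T_{(C\cap\ball{c}{\rho})}(c)$. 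This step is routine.

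For part~\ref{l:0406a-ii}, I would again prove two inclusions. The inclusion $\bigcup_{k\in K}\pa(Y_k)\subseteq T_Y(y)$ is the easy direction: fix $k$ and $v\in\pa(Y_k)$; since $y\in Y_k$ and $Y_k$ is affine, the segment $y+t v$ lies in $Y_k\subseteq Y$ for all $t$, so taking $t_k\dn 0$ and $x_k:=y+t_k v$ exhibits $v\in T_Y(y)$. The reverse inclusion $T_Y(y)\subseteq\bigcup_{k\in K}\pa(Y_k)$ is where the finiteness of $K$ is used and is the main obstacle. Given $v\in T_Y(y)$ witnessed by $(x_j)$ in $Y=\bigcup_{k\in K}Y_k$, each $x_j$ lies in some $Y_{k(j)}$; since $K$ is finite, by pigeonhole some fixed index $k_0$ occurs infinitely often, and passing to that subsequence I may assume all $x_j\in Y_{k_0}$. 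Then $(x_j-y)/t_j$ lies in $\pa(Y_{k_0})$ because $y\in Y_{k_0}$ (here I use the hypothesis $y\in\bigcap_{k\in K}Y_k$) and $Y_{k_0}$ is affine, so each difference quotient is a vector parallel to $Y_{k_0}$; as $\pa(Y_{k_0})$ is a closed linear subspace, the limit $v$ lies in $\pa(Y_{k_0})\subseteq\bigcup_{k\in K}\pa(Y_k)$.

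Finally, part~\ref{l:0406a-iii} is immediate: if each $Y_k$ is a linear subspace then $\pa(Y_k)=Y_k$, so part~\ref{l:0406a-ii} gives $T_Y(y)=\bigcup_{k\in K}\pa(Y_k)=\bigcup_{k\in K}Y_k=Y$. The only subtlety worth flagging is the role of the standing hypothesis $y\in\bigcap_{k\in K}Y_k$, which guarantees $y$ lies in \emph{every} $Y_k$ and hence that each difference quotient genuinely sits in the corresponding parallel subspace; without it the pigeonhole subsequence argument in part~\ref{l:0406a-ii} would fail.
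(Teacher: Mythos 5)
Your proof is correct: all three parts follow exactly the routine unwinding of the tangent-cone definition (tail of the sequence for (i), pigeonhole over the finite index set $K$ plus closedness of $\pa(Y_{k_0})$ for (ii), and $\pa(Y_k)=Y_k$ for (iii)), and your use of the hypothesis $y\in\bigcap_{k\in K}Y_k$ is exactly where it is needed. The paper itself omits the proof as ``elementary,'' and your argument is precisely the intended one, so there is nothing further to compare.
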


\begin{lemma}
\label{l:0406c}
Let $a=(a_1,\ldots,a_\dm)\in A$ and suppose that
$\disp 0<\rho\leq\min_{j\in I(a)}|a_j|$. Then
\begin{equation}\label{e:0406c}
\ball{a}{\rho}\cap A = \ball{a}{\rho} \cap \bigcup_{I(a)\subseteq J\in \mcJ_s}A_J.
\end{equation}
\end{lemma}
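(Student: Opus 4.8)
The plan is to prove the two inclusions separately, with all the content residing in ``$\subseteq$''. The inclusion ``$\supseteq$'' is immediate: every $J\in\mcJ_s$ satisfies $A_J\subseteq A$ (since $y\in A_J$ forces $\|y\|_0=\card I(y)\leq\card J=s$), so $\bigcup_{I(a)\subseteq J\in\mcJ_s}A_J\subseteq A$, and intersecting with $\ball{a}{\rho}$ gives the claim. For ``$\subseteq$'' I would fix $x\in\ball{a}{\rho}\cap A$. By Proposition~\ref{p:AJbasic}\ref{p:AJbasic-iii}, $x\in A_J$ is equivalent to $I(x)\subseteq J$, so it suffices to exhibit some $J\in\mcJ_s$ with $I(a)\cup I(x)\subseteq J$. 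Such a $J$ exists precisely when $\card\big(I(a)\cup I(x)\big)\leq s$, because any subset of $\{1,\dots,\dm\}$ of size at most $s$ can be enlarged to one of size exactly $s$ (using $s\leq\dm$). Thus the whole lemma reduces to the single cardinality estimate $\card\big(I(a)\cup I(x)\big)\leq s$.

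To establish this estimate I would set $D:=I(a)\smallsetminus I(x)$, so that $\card\big(I(a)\cup I(x)\big)=\card I(x)+\card D=\|x\|_0+\card D\leq s+\card D$. For each $j\in D$ we have $x_j=0$ and $j\in I(a)$, whence $|x_j-a_j|=|a_j|\geq\min_{i\in I(a)}|a_i|\geq\rho$. Since these coordinates are disjoint, $\rho^2\geq\|x-a\|^2\geq\sum_{j\in D}|a_j|^2\geq\card D\cdot\rho^2$, which forces $\card D\leq1$.

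It remains to handle the boundary case $\card D=1$, which is the only delicate point and hence the main obstacle; it arises exactly when $\rho=\min_{i\in I(a)}|a_i|$ and a point $x$ on the sphere $\|x-a\|=\rho$ drops one coordinate of $I(a)$. Writing $D=\{j_0\}$, the displayed chain becomes tight: $\sum_{j\neq j_0}|x_j-a_j|^2\leq\rho^2-|a_{j_0}|^2\leq0$, so $x_j=a_j$ for every $j\neq j_0$; consequently $I(x)=I(a)\smallsetminus\{j_0\}$ and $I(a)\cup I(x)=I(a)$, whose cardinality is $\|a\|_0\leq s$. In the complementary case $\card D=0$ one has $I(a)\subseteq I(x)$ and $\card\big(I(a)\cup I(x)\big)=\|x\|_0\leq s$. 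Either way the estimate holds, and choosing any $J\in\mcJ_s$ with $I(a)\cup I(x)\subseteq J$ completes the proof. The crux is therefore to rule out $x$ acquiring support outside $I(a)$ near the sphere; the tight-norm computation shows that whenever an index is lost from $I(a)$, the point is otherwise pinned to $a$, so the union never escapes an admissible $J$.
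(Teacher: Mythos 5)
Your proof is correct, and it shares the paper's overall skeleton: ``$\supseteq$'' is immediate, and for ``$\subseteq$'' both arguments reduce the claim to $\card\big(I(a)\cup I(x)\big)\leq s$, enlarge this union to some $J\in\mcJ_s$, and conclude via Proposition~\ref{p:AJbasic}\ref{p:AJbasic-iii}. Where you genuinely diverge is in how that support estimate is obtained. The paper simply quotes Lemma~\ref{l:0314a}\ref{l:0314a5}: if $I(x)\nsubseteq I(a)$ and $I(a)\nsubseteq I(x)$, then $\rho^2\geq\|x-a\|^2\geq\min_{i\in I(x)}|x_i|^2+\min_{j\in I(a)}|a_j|^2>\rho^2$, absurd --- the \emph{strict} inequality, supplied by the minimum nonzero coordinate of $x$ over $I(x)\smallsetminus I(a)$, kills the boundary case outright and yields the dichotomy $I(x)\subseteq I(a)$ or $I(a)\subseteq I(x)$, so the union is $I(x)$ or $I(a)$, each of cardinality at most $s$. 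You instead count the dropped indices $D=I(a)\smallsetminus I(x)$, note that each costs at least $\rho^2$ in $\|x-a\|^2$ to get $\card D\leq 1$, and then settle the tight case $\card D=1$ by an equality analysis forcing $x_j=a_j$ for all $j\neq j_0$. The two routes rest on the same Pythagorean observation about disjoint coordinates, and indeed your case analysis reproduces the paper's dichotomy exactly ($\card D=1$ with tightness gives $I(x)\subseteq I(a)$; $\card D=0$ gives $I(a)\subseteq I(x)$). What your version buys is self-containedness --- you re-derive inline what Lemma~\ref{l:0314a}\ref{l:0314a5} packages --- plus a sharper conclusion in the sphere-touching case ($x$ is pinned to $a$ off a single coordinate); what it costs is the extra boundary-case bookkeeping that the paper's strict inequality makes unnecessary. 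Both proofs are complete and correct.
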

\begin{proof}
The inclusion ``$\supseteq$'' is clear.
To prove ``$\subseteq$'', let $x\in A\cap\ball{a}{\rho}$.
If $I(x)\nsubseteq I(a)$ and $I(a)\nsubseteq I(x)$,
then Lemma~\ref{l:0314a}\ref{l:0314a5} implies
$\rho^2\geq\|x-a\|^2\geq \min_{i\in I(x)}|x_i|^2+\min_{j\in I(a)}|a_j|^2
>\rho^2$, which is absurd.
Therefore, $I(x)\subseteq I(a)$ or $I(a)\subseteq I(x)$.
Furthermore, there exists $J\in\mcJ_s$ such that
$I(a)\subseteq I(a)\cup I(x)\subseteq J$.
By Proposition~\ref{p:AJbasic}\ref{p:AJbasic-iii}, $x\in A_J$.
This completes the proof.
\end{proof}

\begin{corollary}
Let $a\in A$. 
If $s=\dm$, then $A$ is superregular at $a$;
otherwise, 
$A$ is superregular at $a$ $\Leftrightarrow$ $\|a\|_0=s$. 
\end{corollary}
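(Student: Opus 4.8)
The plan is to read ``superregular'' as ``$X$-superregular'', so that the only normal cone involved is the proximal normal cone $\pn{A}{X}(b)=\pnX{A}(b)$, for which Proposition~\ref{p:prox-spr} supplies a closed form. The case $s=\dm$ is immediate: then $A=X$ and $\pnX{A}(b)=\{0\}$ for every $b$ (Proposition~\ref{p:prox-spr}, since $\supp(b)^\perp=\{0\}$ once $\|b\|_0=\dm$), so the implication in \eqref{e:dreg} is vacuous and $A$ is $(X,0,\dd)$-regular at $a$ for every $\dd>0$. For $s\le\dm-1$ I would establish the two implications of the stated equivalence separately.

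For ``$\|a\|_0=s\Rightarrow$ superregular'' I would pick $\dd>0$ with $\dd<\min\menge{|a_j|}{j\in I(a)}$ and invoke Lemma~\ref{l:0406c}: since $\card I(a)=s$, the unique $J\in\mcJ_s$ with $I(a)\subseteq J$ is $J=I(a)$, so $A$ coincides with the subspace $A_{I(a)}=\supp(a)$ on $\ball{a}{\dd}$. Thus any $y,b\in A$ with $\|y-a\|\le\dd$ and $\|b-a\|\le\dd$ lie in $A_{I(a)}$; then $b\in A_{I(a)}$ gives $I(b)\subseteq I(a)$ by Proposition~\ref{p:AJbasic}\ref{p:AJbasic-iii}, while Lemma~\ref{l:0314a}\ref{l:0314a4} gives $I(a)\subseteq I(b)$, so $I(b)=I(a)$ and Proposition~\ref{p:prox-spr} yields $\pnX{A}(b)=A_{I(a)}^\perp$. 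Since $y-b\in A_{I(a)}$ is orthogonal to every $u\in\pnX{A}(b)$, we get $\scal{u}{y-b}=0$, so $A$ is $(X,0,\dd)$-regular, and in particular superregular, at $a$.

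The real content is the converse, which I would prove contrapositively: if $\|a\|_0<s\le\dm-1$ then $A$ fails to be superregular at $a$. Here the two inequalities are used to choose pairwise-disjoint index sets inside $\{1,\dots,\dm\}\smallsetminus I(a)$: a set $K$ with $\card K=s-\|a\|_0-1\ge 0$ and two distinct singletons $\{p\}$, $\{q\}$; this is possible precisely because $\dm-\|a\|_0\ge(s-\|a\|_0-1)+2$, i.e. $s\le\dm-1$. For small $t>0$ set $b:=a+t\sum_{k\in K}e_k+te_p$ and $y:=a+t\sum_{k\in K}e_k+te_q$; both have sparsity exactly $s$, both belong to $A$, and both tend to $a$ as $t\downarrow 0$. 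Because $q\notin I(b)$, Proposition~\ref{p:AJbasic}\ref{p:AJbasic-iv} and Proposition~\ref{p:prox-spr} give $u:=e_q\in\pnX{A}(b)$, while $y-b=t(e_q-e_p)$. The decisive comparison is $\scal{u}{y-b}=t$ versus $\|u\|\,\|y-b\|=t\sqrt2$: the unit normal $u$ meets $y-b$ at a $45^\circ$ angle, so for the fixed value $\ve_0:=\tfrac12<\tfrac1{\sqrt2}$ the implication \eqref{e:dreg} fails for every such $t$, hence for arbitrarily small perturbations. Consequently $A$ is $(X,\ve_0,\dd)$-regular at $a$ for no $\dd>0$, so it is not superregular there.

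I expect this last construction to be the main obstacle. The two easy directions merely reduce superregularity to the fact that $A$ is locally a single subspace when $\|a\|_0=s$; the failure of superregularity instead requires exhibiting the genuine ``kink'' of the union $\bigcup_{I(a)\subseteq J\in\mcJ_s}A_J$ at $a$. The delicate point is the bookkeeping that forces $b$ and $y$ to have sparsity \emph{exactly} $s$ (so that $\pnX{A}$ is nontrivial and $u\neq 0$) while still leaving a free coordinate direction $e_q$ transverse to both; this is exactly where the hypotheses $\|a\|_0<s$ and $s\le\dm-1$ enter, and it is what keeps the argument from degenerating.
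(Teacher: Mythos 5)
Your proof is correct, but on the substantive direction it takes a genuinely different route from the paper's. For the case $s=\dm$ and the implication ``$\|a\|_0=s\Rightarrow$ superregular'' you and the paper rely on the same key fact, Lemma~\ref{l:0406c}: near a point of maximal sparsity $s$, the set $A$ coincides with the single subspace $A_{I(a)}$. The paper then invokes \cite[Remark~9.2(vii)]{BLPW12a} (local convexity implies superregularity), whereas you verify the defining inequality \eqref{e:dreg} by hand via orthogonality, obtaining the slightly stronger conclusion that $A$ is $(X,0,\dd)$-regular at $a$; these are essentially equivalent. The real divergence is in proving failure of superregularity when $\|a\|_0<s\leq\dm-1$. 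The paper argues indirectly: by Theorem~\ref{t:Mnor-spr}, $N_A(a)=\bigcup_{I(a)\subseteq J\in\mcJ_s}A_J^\perp$ is a union of two or more distinct subspaces of the same dimension, hence nonconvex, hence distinct from the (always convex) Fr\'echet normal cone, so $A$ is not Clarke regular at $a$, and non-superregularity then follows from \cite[Corollary~4.5]{LLM}. You instead exhibit an explicit witness: points $b=a+t\sum_{k\in K}e_k+te_p$ and $y=a+t\sum_{k\in K}e_k+te_q$ of sparsity exactly $s$, and the unit proximal normal $u=e_q\in\pnX{A}(b)$ (justified by Proposition~\ref{p:prox-spr} and Proposition~\ref{p:AJbasic}\ref{p:AJbasic-iv}), which meets $y-b=t(e_q-e_p)$ at a $45^\circ$ angle, so \eqref{e:dreg} fails at arbitrarily small scales for any fixed $\ve<1/\sqrt{2}$. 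Your index bookkeeping is sound—choosing $K$ with $\card K=s-\|a\|_0-1$ plus two spare indices $p,q$ is possible precisely because $s\leq\dm-1$, and the degenerate case $K=\emp$ causes no trouble. What each approach buys: yours is self-contained and elementary (it needs neither the Mordukhovich cone formula nor the two external citations) and gives quantitative information about how badly regularity fails; the paper's is shorter given that Theorem~\ref{t:Mnor-spr} is already in hand, and it situates the failure within the broader theory by identifying the obstruction as a failure of Clarke regularity.
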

\begin{proof}
Since $A=X$ if $s=\dm$, the first statement is clear.
We now consider two cases. 
\emph{Case~1:} $\|a\|_0\leq s-1$.
By \eqref{e:Mnor-spr},
\begin{equation}
N_A(a) = \bigcup_{I(a)\subseteq J\in\mcJ_s} A_J^\perp.
\end{equation}
Since $\card I(a)<s$, $N_A(a)$ is therefore
the finite union of two or more different linear subspaces of $X$ all
of the same dimension $\dm-s$. Hence $N_A(a)$ cannot be convex.
On the other hand, $\fnc{A}(a)$ is always convex.
Altogether, $\fnc{A}(a)\neq N_A(a)$.
Thus, by \cite[Definition~6.4]{Rock98},
$A$ is not Clarke regular at $a$.
Hence \cite[Corollary~4.5]{LLM} implies that $A$ is not superregular at $a$.

\emph{Case~2:} $\|a\|_0= s$.
Let $\rho$ be as in Lemma~\ref{l:0406c}.
Then Lemma~\ref{l:0406c} implies that
\begin{equation}
\ball{a}{\rho}\cap A=\ball{c}{\rho}\cap A_{I(a)}
\end{equation}
is convex because it is the intersection of a ball and a linear
subspace. By \cite[Remark~9.2(vii)]{BLPW12a}, $A$ is superregular
at $c$.
\end{proof}

\begin{lemma}\label{l:0406b}
Let $a\in A$. Then
\begin{equation}
\bigcup_{I(a)\subseteq J\in\mcJ_s} A_J=\supp(a)+\menge{x\in X}{\|x\|_0\leq s-\|a\|_0}.
\end{equation}
\end{lemma}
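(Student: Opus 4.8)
The plan is to prove the two inclusions of the set equality separately, translating throughout between the index-set language (via $I(\cdot)$ and $\mcJ_s$) and the support language (via $\supp(\cdot)$). Throughout, write $a\in A$ so that $\|a\|_0=\card(I(a))\leq s$, and recall from Proposition~\ref{p:AJbasic}\ref{p:AJbasic-ii+} that $\supp(a)=A_{I(a)}$ and from \ref{p:AJbasic-iii} that $x\in A_J\Leftrightarrow I(x)\subseteq J$.

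For the inclusion ``$\subseteq$'', I would take $z\in A_J$ for some $J\in\mcJ_s$ with $I(a)\subseteq J$, and produce a decomposition $z=u+v$ with $u\in\supp(a)$ and $\|v\|_0\leq s-\|a\|_0$. The natural choice is $u:=P_{\supp(a)}z=P_{A_{I(a)}}z$ and $v:=z-u$. By Proposition~\ref{p:projAJ}, $u$ agrees with $z$ on $I(a)$ and is zero off $I(a)$, so $u\in\supp(a)$; and $v$ agrees with $z$ on $J\smallsetminus I(a)$ and is zero elsewhere, so $I(v)\subseteq J\smallsetminus I(a)$, giving $\|v\|_0\leq\card(J)-\card(I(a))=s-\|a\|_0$. (Alternatively, this is exactly the content of Lemma~\ref{l:0314a}\ref{l:0314a4-} applied with $y$ any vector having $I(y)=J$, which is arguably cleaner since that lemma is already proved.) This establishes $z\in\supp(a)+\menge{x\in X}{\|x\|_0\leq s-\|a\|_0}$.

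For the reverse inclusion ``$\supseteq$'', I would take $z=u+v$ with $u\in\supp(a)$ and $\|v\|_0\leq s-\|a\|_0$, and exhibit a single $J\in\mcJ_s$ with $I(a)\subseteq J$ and $z\in A_J$. Since $u\in\supp(a)=A_{I(a)}$ we have $I(u)\subseteq I(a)$, and hence by Lemma~\ref{l:0314a}\ref{l:0314a2+} we get $I(z)=I(u+v)\subseteq I(u)\cup I(v)\subseteq I(a)\cup I(v)$. The set $I(a)\cup I(v)$ has cardinality at most $\|a\|_0+\|v\|_0\leq\|a\|_0+(s-\|a\|_0)=s$, so I can enlarge it to an index set $J\in\mcJ_s$ (of cardinality exactly $s$) with $I(a)\cup I(v)\subseteq J$; this is possible precisely because $\|a\|_0\leq s\leq\dm$. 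Then $I(a)\subseteq J$ and $I(z)\subseteq J$, so by Proposition~\ref{p:AJbasic}\ref{p:AJbasic-iii} we have $z\in A_J$, which lies in the union on the left.

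The only point requiring care—and the closest thing to an obstacle—is the enlargement step in ``$\supseteq$'': I must confirm that an index set of size exactly $s$ containing $I(a)\cup I(v)$ actually exists, i.e.\ that $\card(I(a)\cup I(v))\leq s\leq\dm$, which holds by the hypotheses $a\in A$ and $\|v\|_0\leq s-\|a\|_0$ together with the standing assumption $s\leq\dm$. Everything else is a routine translation between supports, index sets, and the linear spaces $A_J$, using the already-established calculus in Lemma~\ref{l:0314a} and Proposition~\ref{p:AJbasic}.
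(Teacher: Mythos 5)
Your proof is correct and follows essentially the same route as the paper's: the same coordinate-splitting of $z\in A_J$ along $I(a)$ versus $J\smallsetminus I(a)$ for ``$\subseteq$'' (the paper phrases it as $J=I(a)\,\dotcup\,K$ and $z=y+x$ with $y\in A_{I(a)}$, $x\in A_K$), and the identical cardinality-bound-then-enlarge argument for ``$\supseteq$''. The enlargement step you flag as delicate is handled the same way in the paper, using $\card(I(a)\cup I(v))\leq\|a\|_0+(s-\|a\|_0)=s\leq\dm$.
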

\begin{proof}
``$\subseteq$'': Let $z\in A_J$, where $I(a)\subseteq J\in\mcJ_s$.
Write $J=I(a)\dotcup K$, where $K := J\smallsetminus I(a)$ and
the union is disjoint.
Then $z=y+x$, where $y\in A_{I(a)}=\supp(a)$, $x\in A_{K}$,
and $\|x\|_0 \leq \card(K)=\card(J)-\card(I(a))=s-\|a\|_0$.

``$\supseteq$'':
Let $x\in X$ be such that $\|x\|_0\leq s-\|a\|_0$,
and let $y\in\supp(a)$.
By Lemma~\ref{l:0314a},
$I(y)\subseteq I(a)$,
$I(x+y)\subseteq I(x)\cup I(y)\subseteq I(x)\cup I(a)$ and
$\|x+y\|_0\leq\|x\|_0+\|y\|_0\leq (s-\|a\|_0)+\|a\|_0=s$.
Hence, there exists $J\in\mcJ_s$ such that $I(x)\cup I(a)\subset J$,
and therefore $x+y\in A_{I(x)\cup I(a)}\subseteq A_J$.
\end{proof}


\begin{theorem}[tangent cone to $A$]
Let $a=(a_1,\ldots,a_\dm)\in A$. Then
\begin{equation}\label{e:T_A}
T_A(a)=\bigcup_{I(a)\subseteq J\in\mcJ_s} A_J
=\supp(a)+\menge{x\in X}{\|x\|_0\leq s-\|a\|_0};
\end{equation}
consequently,
\begin{equation}
\|a\|_0=s\quad\Leftrightarrow\quad T_A(a) = A_{I(a)} = \supp(a).
\end{equation}
\end{theorem}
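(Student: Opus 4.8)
The plan is to prove the first equality in \eqref{e:T_A} by exploiting the locality of the tangent cone, after which the second equality is handed to us verbatim by Lemma~\ref{l:0406b} and the final equivalence falls out as a cardinality count. Abbreviate $Y := \bigcup_{I(a)\subseteq J\in\mcJ_s} A_J$. First I would record the two structural facts about $Y$ that make the machinery applicable: by Proposition~\ref{p:AJbasic}\ref{p:AJbasic-iii}, every $J\in\mcJ_s$ with $I(a)\subseteq J$ satisfies $a\in A_J$, so that $a\in\bigcap_{I(a)\subseteq J\in\mcJ_s}A_J\subseteq Y$, and $Y$ is a finite union of \emph{linear} subspaces all passing through $a$. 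Then I would fix $\rho$ with $0<\rho\leq\min_{j\in I(a)}|a_j|$ (any $\rho>0$ if $I(a)=\varnothing$) and invoke Lemma~\ref{l:0406c} to obtain $\ball{a}{\rho}\cap A=\ball{a}{\rho}\cap Y$.

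The heart of the argument is then a short chain. Since the tangent cone depends only on a neighborhood of the base point, Lemma~\ref{l:0406a}\ref{l:0406a-i} applied to $A$ gives $T_A(a)=T_{A\cap\ball{a}{\rho}}(a)$, and the same lemma applied to $Y$ gives $T_{Y\cap\ball{a}{\rho}}(a)=T_Y(a)$. Because $A\cap\ball{a}{\rho}=Y\cap\ball{a}{\rho}$, the two middle tangent cones are literally identical, so $T_A(a)=T_Y(a)$. Finally, as $Y$ is a finite union of linear subspaces with $a$ lying in their common intersection, Lemma~\ref{l:0406a}\ref{l:0406a-iii} evaluates $T_Y(a)=Y$. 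Concatenating these equalities yields $T_A(a)=Y=\bigcup_{I(a)\subseteq J\in\mcJ_s}A_J$, and the second equality $Y=\supp(a)+\menge{x\in X}{\|x\|_0\leq s-\|a\|_0}$ is precisely the content of Lemma~\ref{l:0406b}.

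For the ``consequently'' clause I would argue by cardinality. If $\|a\|_0=s$, then $\card I(a)=s$ forces $J=I(a)$ to be the unique element of $\mcJ_s$ containing $I(a)$, so the union collapses and $T_A(a)=A_{I(a)}=\supp(a)$ by Proposition~\ref{p:AJbasic}\ref{p:AJbasic-ii+}. Conversely, if $\|a\|_0<s$, then since $\|a\|_0<s\leq\dm$ one can enlarge $I(a)$ to some $J\in\mcJ_s$ with $I(a)\subsetneq J$; by Proposition~\ref{p:AJbasic}\ref{p:AJbasic-ii} together with $\dim A_J=\card J$ this gives $A_{I(a)}\subsetneq A_J\subseteq T_A(a)$, so $T_A(a)\neq\supp(a)$. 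This establishes the equivalence in both directions.

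I do not anticipate a genuine obstacle: all the difficulty has been front-loaded into Lemmas~\ref{l:0406a},~\ref{l:0406c}, and~\ref{l:0406b}. The only point deserving care is that the locality of the tangent cone (Lemma~\ref{l:0406a}\ref{l:0406a-i}) must be used \emph{twice}—once to replace $A$ by the union $Y$ in a ball around $a$, and once more to pass from $Y\cap\ball{a}{\rho}$ back to $Y$ when applying Lemma~\ref{l:0406a}\ref{l:0406a-iii}—and that one must verify $a$ belongs to the common intersection of the subspaces $A_J$ before that last lemma can be invoked.
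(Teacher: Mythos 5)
Your proposal is correct and follows essentially the same route as the paper's proof: the chain $T_A(a)=T_{A\cap\ball{a}{\rho}}(a)=T_{Y\cap\ball{a}{\rho}}(a)=T_Y(a)=Y$ via Lemma~\ref{l:0406a}\ref{l:0406a-i}, Lemma~\ref{l:0406c}, and Lemma~\ref{l:0406a}\ref{l:0406a-iii}, with Lemma~\ref{l:0406b} supplying the second equality. Your explicit cardinality argument for the ``consequently'' clause and the remark on the degenerate case $I(a)=\varnothing$ merely spell out details the paper leaves as clear.
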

\begin{proof}
Set
\begin{equation}
\rho:=\min_{j\in I(a)}|a_j|>0
\quad\text{and}\quad
A(a):=\bigcup_{I(a)\subseteq J\in\mcJ_s} A_J=\bigcup_{a\in A_J, J\in\mcJ_s} A_J.
\end{equation}
Lemma~\ref{l:0406a}\ref{l:0406a-i} and Lemma~\ref{l:0406c} imply
\begin{equation}
T_A(a)=T_{A\cap\ball{a}{\rho}}(a)=T_{A(a)\cap\ball{a}{\rho}}(a)=T_{A(a)}(a).
\end{equation}
On the other hand, by  Lemma~\ref{l:0406a}\ref{l:0406a-iii},
$T_{A(a)}(a)=A(a)$.
Altogether, $T_A(a) = A(a)$ and we have established the first equality
in \eqref{e:T_A}.
The second equality is precisely Lemma~\ref{l:0406b}.
Finally, the ``consequently'' part is clear from \eqref{e:T_A}.
\end{proof}

\begin{remark}
\label{r:easycones}
For the affine set $B$, the normal and tangent cones are much simpler to
derive: indeed,
because $\pa(B) = \ker M$, it follows that
$T_B(x) = \ker M$ and $N_B(x)=(\ker M)^\perp = \ran M^T$,
for every $x\in B$.
\end{remark}

\begin{remark}[transversality]
Recall \eqref{e:sparse feas} and assume that $c\in A\cap B$.
By \eqref{e:T_A}, Remark~\ref{r:easycones},
and e.g.\ \cite[Lemma~1.43(i)]{BC2011},
we have the implications
\begin{subequations}
\label{e:120412a}
\begin{align}
T_A(c)+T_B(c)=\RR^\dm
&\Leftrightarrow \Bigg(\bigcup_{I(c)\subseteq J\in \mcJ_s}A_J\Bigg) +
\ker(M) = \RR^\dm\\
&\Leftrightarrow \bigcup_{I(c)\subseteq J\in \mcJ_s}\big( A_J +
\ker(M)\big) = \RR^\dm\\
&\Leftrightarrow \inte\Bigg(\bigcup_{I(c)\subseteq J\in \mcJ_s}\big( A_J +
\ker(M)\big)\Bigg) = \RR^\dm\\
&\Rightarrow \overline{\inte\Bigg(\bigcup_{I(c)\subseteq J\in \mcJ_s}\big( A_J +
\ker(M)\big)\Bigg)} =
\overline{\bigcup_{I(c)\subseteq J\in \mcJ_s}\inte\big( A_J +
\ker(M)\big)} = \RR^\dm.
\end{align}
\end{subequations}
Let us assume momentarily that $T_A(c)+T_B(c)=\RR^\dm$.
By \eqref{e:120412a}, there exists $J\in\mcJ_s$ such that
$I(c)\subseteq J$ and $A_J+\ker(M)=\RR^\dm$.
Hence $s+\dim\ker(M) = \dim A_J+\dim \ker(M) \geq \dim(A_J+\ker(M))
=\dim \RR^\dm =\dm=\dim\ker(M)+\rank(M)$.
We have established the implication
\begin{equation}
\label{e:120412b}
T_A(c)+T_B(c)=\RR^\dm
\quad\Rightarrow\quad
s\geq \rank(M);
\end{equation}
that is, \emph{transversality} imposes a lower bound on $s$
and is thus at odds with the objective of finding the \emph{sparsest}
points in $A\cap B$.
\end{remark}

\subsection*{The MAP for the sparse feasibility problem}

We begin with an example illustrating shortcomings of previous approaches.

\begin{example}\label{ex:concrete1}
Suppose that
\begin{equation}
M=\begin{pmatrix}1&1&1\\1&1&0 \end{pmatrix},
\quad p=\begin{pmatrix} 1\\1\end{pmatrix},
\quad\text{and}\quad
s = 1;
\end{equation}
thus,
$m=2$ and $\dm=3$.
Then $B = (1,0,0)+\RR(-1,1,0)$ and hence the set of all solutions to \eqref{e:sparse feas}
consists of\,\footnote{When there is no cause for confusion,
we shall write column vectors as row vectors for space reasons.}
$x^*:=(1,0,0)$ and $y^* := (0,1,0)$.
Since $\|x^*\|=\|y^*\|=s$,
Theorem~\ref{t:Mnor-spr} yields
\begin{equation}
\label{e:N_A(x/y)}
   N_A(x^*) = \{0\}\times\RR\times\RR
\quad\text{and}\quad
N_A(y^*) = \RR\times\{0\}\times\RR.
\end{equation}
On the other hand,
$(\forall x\in B)$ $N_B(x) = \ran M^T
=\lspan\{(1,1,1),(1,1,0)\}$ by Remark~\ref{r:easycones}.
Altogether,
\begin{equation}\label{e:eg1}
 N_A(x^*)\cap\big(-N_B(x^*)\big) = N_A(y^*)\cap\big(-N_B(y^*)\big)
 = \{0\}\times\{0\}\times\RR \neq\{(0,0,0)\}.
\end{equation}
Consequently, neither the Lewis-Luke-Malick framework \cite{LLM} nor
the framework proposed in \cite{Luke12} is able to deal with this case.
Furthermore, in view of \eqref{e:120412b},
the transversality condition
\begin{equation}\label{e:eg1b}
T_A(c)+T_B(c)=\RR^\dm
\end{equation}
proposed by Lewis and Malick \cite{LM}
also always fails because $s=1\not\geq 2=\rank(M)$.

Finally, readers familiar with sparse optimization will also note that the usual
sufficient conditions for the correspondence of solutions to the
nonconvex problem to those of convex relaxations---namely the
restricted isometry property \cite{CandesTao04} or
the mutual coherence condition  \cite{DonohoElad03}---are not satisfied
either.
Constraint qualifications as developed in the present work have no
apparent relation to conditions like restricted isometry or mutual
coherence conditions used to guarantee the correspondence between
solutions to convex surrogate problems and solutions to the problem with
the original $\|\cdot\|_0$ objective.  Indeed,
if the matrix M is changed for instance to 
\begin{equation}
   \begin{pmatrix}1&1&1\\1&2&0
     \end{pmatrix}
\end{equation}
the mutual coherence condition is satisfied and a unique sparsest solution exists,
but still the constraint qualifications \eqref{e:eg1} and \eqref{e:eg1b} are not satisfied.
\end{example}

We are now ready for our main result, which is very general and which
in particular is applicable to the setting of Example~\ref{ex:concrete1}.

\begin{theorem}[main result for sparse affine feasibility and linear local
convergence of MAP]~\\
\label{t:mainsparse}
Let $A$, $\mcA$, $\wt{\mcA}$ $B$, $\mcB$ and $\wt{\mcB}$ be defined by \eqref{e:russell}.
Suppose that $s\leq\dm-1$, that $c\in A\cap B$, and fix $\dd\in\left]0,\overline\dd\right[$~
for $\overline\dd:=\tfrac{1}{3}\min\menge{d_{A_J}(c)}{c\not\in A_J,J\in \mcJ_s}$.
Then
\begin{equation}\label{e:ddbar}
\overline\dd=\tfrac{1}{3}\min\menge{|c_j|}{j\in I(c)} 
\end{equation}
and
\begin{equation}\label{e:theta-spr}
\overline\alpha=\overline\theta=
\theta_{3\dd}(\mcA,\wt{\mcA},\mcB,\wt{\mcB})
=\max\menge{c(A_J,B)}{c\in A_J, J\in\mcJ_s}<1,
\end{equation}
where $\theta_{3\dd}$, $\overline{\theta}$, $\overline{\alpha}$
denote the joint-CQ-number, the limiting joint-CQ-number and 
the exact joint-CQ-number (\eqref{e:jCQn}, \eqref{e:ljCQn} and  
\eqref{e:0217b} respectively) at $c$ associated with 
$(\mcA,\wt{\mcA},\mcB,\wt{\mcB})$.
Suppose the starting point of the MAP $b_{-1}$ satisfies
$\|b_{-1}-c\|\leq\frac{(1-\overline{\theta})\dd}{6(2-\overline{\theta})}$.
Then $(a_k)_\kkk$ and $(b_k)_\kkk$ converge linearly
to some point in $\bar{c}\in A\cap B\cap\ball{c}{\dd}$
with rate $\overline{\theta}^2$.
\end{theorem}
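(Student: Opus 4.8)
The plan is to verify the four hypotheses of Theorem~\ref{p:jdb} and then read off its conclusion, with the genuine content concentrated in the CQ-number computation \eqref{e:theta-spr}. The formula \eqref{e:ddbar} is immediate: it is exactly the identity of Lemma~\ref{l:120412c} multiplied by $\tfrac13$.

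Next I would check that the setting \eqref{e:MAPsettings} and the two joint-regularity assumptions of Theorem~\ref{p:jdb} hold; these are routine thanks to the decomposition \eqref{e:russell}. Each $A_J$ and $B$ is an (affine) subspace, hence closed, $A=\bigcup_{J\in\mcJ_s}A_J$ is a finite union of subspaces hence closed, and $c\in A\cap B$ by hypothesis. The containment requirements on $\wt\mcA,\wt\mcB$ are trivially met because we chose $\wt\mcA=\mcA$ and $\wt\mcB=\mcB$: for any set $S$ one has $P_{A_J}(S)\subseteq A_J=\wt A_J$ and $P_B(S)\subseteq B=\wt B$, since projection onto an affine subspace lands in that subspace. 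Finally, the two regularity conditions follow from convexity: each $A_J$ and $B$ is convex, so by the last assertion of Proposition~\ref{p:jsreg} the collection $\mcA$ is $(\wt B,0,\pinf)$-joint-regular and $\mcB$ is $(\wt A,0,\pinf)$-joint-regular at $c$, and a fortiori $(\cdot,0,3\dd)$-joint-regular.

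The heart of the argument, and the step I expect to require the most care, is \eqref{e:theta-spr}. Since $\wt\mcA=\mcA$ and $\wt\mcB=\mcB=(B)$, the joint-CQ-number of Definition~\ref{d:CQn} reduces to $\theta_{3\dd}=\sup_{J\in\mcJ_s}\theta_{3\dd}(A_J,A_J,B,B)$, and I would split this supremum according to whether $c\in A_J$. If $c\in A_J$, then $c\in A_J\cap B$ with $A_J,B$ affine subspaces, so the ``moreover'' part of Theorem~\ref{t:CQn=c} gives $\theta_{3\dd}(A_J,A_J,B,B)=c(A_J,B)<1$ for every $\dd>0$. If $c\notin A_J$, then by the definition of $\overline\dd$ we have $d_{A_J}(c)\ge 3\overline\dd>3\dd$, so there is \emph{no} $a\in A_J$ with $\|a-c\|\le 3\dd$; as $\pn{A_J}{B}(a)=\varnothing$ whenever $a\notin A_J$, the admissible set in \eqref{e:CQn} is empty and that term equals $\sup\varnothing=\minf$. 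Since $c\in A$ guarantees at least one $J\in\mcJ_s$ with $I(c)\subseteq J$ (i.e.\ $c\in A_J$), taking the supremum over the finite index set $\mcJ_s$ yields $\theta_{3\dd}=\max\menge{c(A_J,B)}{c\in A_J,J\in\mcJ_s}<1$. Because $I=\mcJ_s$ and the index set of $\mcB$ (a singleton) are both finite, Theorem~\ref{t:CQ1}\ref{t:CQ1iv} gives $\overline\alpha=\overline\theta$; and since the terms with $c\notin A_J$ equal $\minf$ for every $\dd'<3\overline\dd$, the limit $\overline\theta=\lim_{\dd'\dn0}\theta_{\dd'}$ equals the same maximum. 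This establishes all the equalities in \eqref{e:theta-spr}.

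With \eqref{e:MAPsettings}, the two joint-regularity conditions, and $\theta:=\theta_{3\dd}=\overline\theta<1$ all in hand, the last step is a direct invocation of Theorem~\ref{p:jdb}. Its starting-point hypothesis is precisely $\|b_{-1}-c\|\le\frac{(1-\theta)\dd}{6(2-\theta)}$, which is what we assume with $\theta=\overline\theta$. Theorem~\ref{p:jdb} then yields linear convergence of $(a_k)_\kkk$ and $(b_k)_\kkk$ to some $\bar c\in A\cap B$ with $\|\bar c-c\|\le\dd$ and rate $\theta^2=\overline\theta^2$; the bound $\|\bar c-c\|\le\dd$ means $\bar c\in\ball{c}{\dd}$, so $\bar c\in A\cap B\cap\ball{c}{\dd}$, as claimed.
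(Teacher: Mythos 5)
Your proposal is correct and takes essentially the same route as the paper's own proof: \eqref{e:ddbar} via Lemma~\ref{l:120412c}, the computation of \eqref{e:theta-spr} by splitting over whether $c\in A_J$ (using Theorem~\ref{t:CQn=c} when $c\in A_J$, emptiness of $\ball{c}{3\dd}\cap A_J$ otherwise, and Theorem~\ref{t:CQ1}\ref{t:CQ1iv} for $\overline\alpha=\overline\theta$), joint-regularity from convexity via Proposition~\ref{p:jsreg}, and a final appeal to Theorem~\ref{p:jdb}. Your explicit verification of the containment and closedness requirements in \eqref{e:MAPsettings} is a detail the paper leaves implicit in the choice \eqref{e:russell}, but the argument is otherwise identical.
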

\begin{proof}
Observe that \eqref{e:ddbar} follows from Lemma~\ref{l:120412c}.
Let $J\in\mcJ_s$.
If $c\not\in A_J$,
then $\ball{c}{3\dd}\cap A_J=\emp$ and hence
$\theta_{3\dd}(A_J,A_J,B,B)=\minf$.
On the other hand, if $c\in A_J$,
then $c\in A_J\cap B$ and hence
$\theta_{3\dd}(A_J,A_J,B,B)=c(A_J,B)<1$
by Theorem \ref{t:CQn=c}.  
Combining this with Theorem~\ref{t:CQ1}\ref{t:CQ1iv}, we obtain \eqref{e:theta-spr}.
Because $A_J$ is a linear subspace and hence convex,
Proposition~\ref{p:jsreg} yields the $(0,\pinf)$-joint-regularity of
$\mcA$; in particular, $\mcA$ is $(\wt{B},0,3\dd)$-joint-regular.
Analogously, $\wt{B}=(B)$ is $(\wt{A},0,3\dd)$-joint-regular.
Now apply Theorem~\ref{p:jdb} to complete the proof.
\end{proof}

\begin{remark}
Some comments regarding Theorem~\ref{t:mainsparse} are in order.
\begin{enumerate}
\item Note that regularity of the intersection is not an assumption of the theorem, but is 
rather {\em automatically} satisfied.  This is in contrast to the results of \cite{LM} and 
\cite{LLM} where the required regularity is assumed to hold.   
In view of Example~\ref{ex:concrete1}, which illustrated that the
notions of regularity developed in \cite{LM} and \cite{LLM} are {\em not} 
satisfied, it is clear
that Theorem~\ref{t:mainsparse} is new and has a genuinely wider
range of applicability.
  
\item
In contrast to \cite{LLM} and \cite{LM}, our analysis yields a
quantification of the neighborhood on which local linear
convergence is guaranteed.
\item Finding the local neighborhood on which linear convergence is
guaranteed is not an easy task, and may well be tantamount of finding
the sparsest solution; however, it does open the door to justify
combining the MAP with more aggressive algorithms such as Douglas-Rachford
in order to find such neighborhoods.
\item Consider again Example~\ref{ex:concrete1} and its notation.
Since $s=1$, $\mcA = (A_1,A_2,A_3)$, where $A_i = \RR e_i$,
while $B = e_1 + \RR(e_2-e_1)$.
Hence $c(A_1,B)=c(\RR e_1,\RR(e_2-e_1)) =
|\tscal{e_1}{(e_2-e_1)/\sqrt{2}}|=1/\sqrt{2}$
by Theorem~\ref{t:CQn=c} and Corollary~\ref{p:CQn2l}.
Similarly, $c(A_2,B)=1/\sqrt{2}$ while $A_3\cap
B=\varnothing$.
Let $c\in\{x^*,y^*\}$.
Then $\overline{\theta}=1/\sqrt{2}$ and
\eqref{e:ddbar} implies that
$\overline{\dd}=1/3$.
The predicted rate of linear convergence is
$\overline{\theta}^2=1/{2}$.
\item The projectors $P_A$ and $P_B$ given by 
\eqref{e:projAU} and
\eqref{e:projAx=b} are easy to implement numerically, 
which we have done.  Indeed, for random initial guesses $b_{-1}$ in the neighborhood
$\ball{c}{(\sqrt{2}-1)/(18(2\sqrt{2}-1))}$
the observed ratios $\|a_{k+1}-c\|/\|a_{k}-c\|$ and $\|b_{k+1}-c\|/\|b_{k}-c\|$ for 
$a_k=P_A b_k$ 
($k\in\NN$, $b_0=P_B b_{-1}$) and 
$b_k = P_B a_{k-1}\in B$ ($k\in\NN\setminus\{0\}$) are $1/2 + |O(10^{-13})|)$.  
The observed rate 
corresponds nicely to the theory under the 
assumption of exact evaluation of the projections.  However, exact projections are not in fact computed 
in practice (in particular the projection onto the affine set $B$), so the numerical illustration is not precisely applicable. 
Inexact alternating projections is beyond the scope of this work.    
\end{enumerate}
\end{remark}


\section*{Conclusion}

We have applied new tools in variational analysis to the problem of finding sparse vectors
in an affine subspace.   
The key tool is the restricted normal cone which generalizes classical
normal cones. 
The restricted normal cones are used to define constraint qualifications, and 
notions of regularity that provide sufficient conditions for local convergence of iterates of the 
elementary method of alternating projections applied to the lower level sets of the 
function $\|\cdot\|_0$ and an affine set.  
Key ingredients were suitable restricting sets
$(\wt{\mcA}$ and $\wt{\mcB})$.
The coarsest choice,
$(\wt{A},\wt{B})=(X,X)$,
recovers the framework by Lewis, Luke, and Malick \cite{LLM}.
We show, however, that the corresponding regularity conditions are not 
satisfied in general for the sparse feasibility problem \eqref{e:sparse feas}.  
The tighter (and hence more powerful) 
choice of $(\wt{\mcA},\wt{\mcB})=(\mcA,\mcB)$ recovers local 
linear convergence and yields an estimate of the radius of convergence.

\subsection*{Acknowledgments}
HHB was partially supported by the Natural Sciences and Engineering
Research Council of Canada and by the Canada Research Chair Program.
This research was initiated when HHB visited the
Institut f\"ur Numerische und Angewandte Mathematik,
Universit\"at G\"ottingen because of his study leave
in Summer~2011. HHB thanks DRL and the Institut for their hospitality.
DRL was supported in part by the German Research Foundation grant SFB755-A4. 
HMP was partially
supported by the Pacific Institute for the Mathematical Sciences and
and by a University of British Columbia research grant. XW was
partially supported by the Natural Sciences and Engineering Research
Council of Canada.



\begin{thebibliography}{999}

\bibitem{ABRS}
H.\ Attouch, J.\ Bolte,
P.\ Redont, and A.\ Soubeyran,
Proximal alternating minimization
and projection methods for nonconvex problems:
an approach based on the Kuryka-Lojasiewicz inequality,
\emph{Mathematics of Operations Research} 35 (2010), 438--457.

\bibitem{BC2011}
H.H.\ Bauschke and P.L.\ Combettes,
\emph{Convex Analysis and Monotone Operator Theory in Hilbert Spaces},
Springer, 2011.

\bibitem{BLPW12a}
H.H.\ Bauschke, D.R.\ Luke, H.M.\ Phan, and X.\ Wang, 
{Restricted normal cones and the method of alternating projections},
arXiv preprint, May 2012,
\url{http://arxiv.org} 

\bibitem{BK}
H.H.\ Bauschke and S.G.\ Kruk,
Reflection-projection method for convex feasibility problems
with an obtuse cone,
\emph{Journal of Optimization Theory and Applications}
 120 (2004), 503--531.

\bibitem{BeckTeboulle11}
A.\ Beck and M.\ Teboulle, A Linearly Convergent Algorithm for Solving 
a Class of Nonconvex/Affine Feasibility Problems, in 
\emph{Fixed-Point Algorithms for Inverse Problems in Science and Engineering},
H.H.\ Bauschke, R.S.\ Burachick, P.L.\ Combettes, V.\ Elser, D.R.\ Luke and
	H.\ Wolkowicz (editors), Springer, 33--48, 2011.

\bibitem{BorLuke10b}
J.M.\ Borwein and D.R.\ Luke, Entropic regularization of the $\ell_0$ function,
in \emph{Fixed-Point Algorithms for Inverse Problems in Science and Engineering},
H.H.\ Bauschke, R.S.\ Burachick, P.L.\ Combettes, V.\ Elser, D.R.\ Luke and
	H.\ Wolkowicz (editors), Springer, 65--92, 2011.

\bibitem{Borzhu05}
J.M.\ Borwein and Q.J.\ Zhu,
\emph{Techniques of Variational Analysis},
Springer-Verlag, 2005.

\bibitem{BrucksteinDonohoElad09}
A.M.\ Bruckstein, D.L.\ Donoho and M.\ Elad, 
From Sparse Solutions of Systems of Equations to Sparse Modeling
	of Signals and Images,
\emph{SIAM Review} 51 (2009), 34--81.


\bibitem{CandesTao04}
E.\ Candes and T.\ Tao,
Near-optimal signal recovery from random projections:
  universal encoding strategies?,
\emph{IEEE Transactions on Information Theory} 52 (2006), 5406--5425.

\bibitem{CensorZenios}
Y.\ Censor and S.A.\ Zenios,
\emph{Parallel Optimization},
Oxford University Press, 1997.


\bibitem{CombTrus90}
P.L.\ Combettes and H.J.\ Trussell,
Method of successive projections for finding a common point of sets in metric spaces,
\emph{Journal of Optimization Theory and Applications}~67 (1990), 
487--507. 


\bibitem{Deutsch}
F.\ Deutsch,
\emph{Best Approximation in Inner Product Spaces},
Springer, 2001.

\bibitem{DonohoElad03}
D.L.\ Donoho and M.\ Elad,
Optimally sparse representation in general (nonorthogonal)
dictionaries via $\ell^1$ minimization,
\emph{Proceedings of the National Academy of Sciences of the USA} {100}
(2003), 2197--2202.

\bibitem{Frie37}
K.\ Friedrichs, On certain inequalities and characteristic value problems for analytic functions and for functions of two variables,
\emph{Transactions of the  AMS} 41 (1937), 321--364.

\bibitem{LaiWang10}
M.J.\ Lai and J.\ Wang, 
An Unconstrained $\ell_q$ Minimization for Sparse Solution of Underdetermined Linear Systems, 
\emph{SIAM Journal of Optimization}, 21 (2010), 82--101. 

\bibitem{LLM}
A.S.\ Lewis, D.R.\ Luke, and J.\ Malick,
Local linear convergence for alternating and
averaged nonconvex projections,
\emph{Foundations of Computational Mathematics} 9 (2009), 485--513.

\bibitem{LM}
A.S.\ Lewis and J.\ Malick,
Alternating projection on manifolds,
\emph{Mathematics of Operations Research} 33 (2008), 216--234.


\bibitem{Luke12}
D.R.\ Luke, Local linear convergence and approximate projections
onto regularized sets,
\emph{Nonlinear Analysis} 75 (2012), 1531--1546.

\bibitem{Luke12b}
D.R.\ Luke,
Prox-regularity of Rank-constraint Sets and Implications for Algorithms,
preprint {arXiv:1112.0526v1}, December 2011,
\texttt{http://arxiv.org/abs/1112.0526}


\bibitem{Boris1}
B.S.\ Mordukhovich,
\emph{Variational Analysis and Generalized Differentiation I},
Springer-Verlag, 2006.

\bibitem{Natarajan95}
B.K.\ Natarajan,
  Sparse approximate solutions to linear systems,
\emph{SIAM Journal on Computation}, 24(1995), 227--234.


\bibitem{Rock70}
R.T.\ Rockafellar,
\emph{Convex Analysis},
Princeton University Press, Princeton, 1970.


\bibitem{Rock98}
R.T.\ Rockafellar and R.J-B\ Wets,
\emph{Variational Analysis},
Springer, 
corrected 3rd printing, 2009.



\end{thebibliography}
\end{document}